\DeclareMathOperator{\nds}{NDS}
\lstdefinelanguage{Lean}{
  morekeywords={structure, def, theorem, lemma, Prop, Type, Int, Nat, noncomputable, forall, fun, let, by, apply, intro, exact, dsimp, simp, have, match, with, if, then, else, inductive, instance, class, where},
  sensitive=true,
  morecomment=[l]--,
  morestring=[b]",
}
\theoremstyle{definition}
\newtheorem{definition}{Definition}[section]
\newtheorem{example}[definition]{Example}
\theoremstyle{plain}
\newtheorem{theorem}[definition]{Theorem}
\newtheorem{lemma}[definition]{Lemma}
\newtheorem{conjecture}[definition]{Conjecture}
\newcommand{\Ical}{\mathcal{I}}
\title{Average-Rare Order Ideals in Functional Preorders}
\author{Masahiro Hachimori \footnote{Institute of Systems and Information Engineering, University of Tsukuba, Email:hachi@sk.tsukuba.ac.jp} \\
Kenji Kashiwabara\footnote{Corresponding author, Graduate School of Arts and Sciences, The University of Tokyo, Email: cashiwa@g.ecc.u-tokyo.ac.jp}}
\date{}
\begin{document}
\maketitle

\begin{abstract}
We prove that for the preorder induced by a function $f\colon V\to V$, the family of all order ideals is \emph{average-rare}, that is, its normalized degree sum $(\nds)$ is nonpositive.  
As a base case in our reduction, we establish the same result for functional partial orders (or rooted forests).  
%We also propose a conjecture that links this phenomenon to Frankl’s Conjecture. 
We also propose a conjecture related to Frankl’s Conjecture. 
All proofs have been formally verified in the proof assistant \textsc{Lean 4}.
\end{abstract}

\section{Introduction}
In this paper, all the sets, set families, partially ordered sets, and preordered sets are finite.

Frankl’s Conjecture~\cite{Frankl} is a longstanding open problem in extremal set theory.  
The conjecture states that a union-closed family that contains the ground set \(V\) and the empty set must contain an \emph{abundant} element, that is, an element that appears in at least half of the sets of the family.

For a set family \(\mathcal F\subseteq2^{V}\),
we define the \emph{degree} $\deg(u)$ of an element $u\in V$ to be the number of sets containing $u$.
Equivalently, the conjecture states that a union-closed family that contains the ground set \(V\) and the empty set must contain an element $u$ with 
$$ \frac{\deg(u)}{|\mathcal F|} \ge 1/2. $$
%
%
%\paragraph{Previous work.}
Progress on the conjecture has been steady but partial.  
Reimer~\cite{Reimer2003} obtained a general lower bound via combinatorial averaging.  
%Falgas-Ravry~\cite{FalgasRavry2012} settled the conjecture for very small families.  
The conjecture is settled for small families by 
Bo\v{s}njak and P. Markovi\'{c}~\cite{BM},  Vu\v{c}kovi\'{c} and \v{Z}ivkovi\'{c}~\cite{Bojan}, and Roberts and Simpson~\cite{Roberts2010ANO}.

Bruhn and Schaudt~\cite{BruhnSchaudt2015} provide an excellent survey.  
%More recently, Gilmer~\cite{Gilmer2022} introduced an entropy-based approach, which Sawin sharpened to the bound \(\tfrac{3-\sqrt5}{2}\)~\cite{sawin2022improved}.  
%Chase and Lovett obtained optimal constants for approximately union-closed systems~\cite{chase2022approximate}.
More recently, Gilmer~\cite{Gilmer2022} introduced an entropy-based approach and showed a positive constant lower bound, and this was sharpened to
\(\tfrac{3-\sqrt5}{2}\) by several authors~\cite{add1,add2,chase2022approximate,add4,add5,sawin2022improved,add6} as conjectured in \cite{Gilmer2022}.
The authors~\cite{HachimoriKashiwabara2024} gave a systematic study of minimality concepts related to the conjecture.

\bigskip

The conjecture is usually phrased for union-closed families as above, but we work with the dual, intersection-closed form.
%to highlight its link to closure systems.
In the intersection-closed form,
the conjecture states that any intersection-closed family of sets that
contains the ground set \(V\) and the empty set must contain a \emph{rare} element, that is, an element that appears in at most half of the sets of the family.  
Equivalently, the conjecture states that any intersection-closed finite family of sets that contains the ground set $V$ and the empty set contains an element $u$ such that
$$ \frac{\deg(u)}{|\mathcal F|} \le 1/2. $$
The equivalence of the conjecture is straightforward by considering the family of complements
$ \{ V-F \,|\, F\in \mathcal{F} \} $.

Let us consider the average value of $\deg(u)/|\mathcal F|$ over all $u\in V$. 
If this average value is at most $1/2$, we say $\mathcal F$ is \emph{average-rare}. More precisely,
$\mathcal F$ is average-rare if
$$ \frac{\sum_{u\in V}\deg(u)/|\mathcal F|}{|V|} \le \frac{1}{2} .$$
If $\mathcal F$ is average-rare, then the existence of a rare element is deduced.
Hence, the average-rarity is a stronger property than the existence of a rare element.

Since we have 
$$ \sum_{u\in V} \deg(u) = \sum_{F\in \mathcal F}|F| $$
by a double-counting argument, to be average-rare is equivalent to
$$ 2\sum_{F\in \mathcal F}|F| \le |\mathcal F| |V|. $$
We define the
\emph{normalized degree sum} by
\[
  \nds(\mathcal F)
    := 2\sum_{F\in\mathcal F}\!|F|
       \;-\;
       |\mathcal F|\,|V|.
\]
%If \(\nds(\mathcal F)\le0\), we say \(\mathcal F\) is \emph{average-rare}.
By this, $\nds(\mathcal F)\le 0$ if and only if $\mathcal F$ is average-rare.
This NDS is a useful tool in studying average-rarity. Previously, the authors used NDS
in showing the average-rarity of 
``ideal families'' in \cite{hachimori2025averagingproblemidealfamilies}.
(The ``ideal families'' in \cite{hachimori2025averagingproblemidealfamilies} are not families of order ideals, unlike in this paper.)

% Our own contributions include a systematic study of minimality concepts~\cite{HachimoriKashiwabara2024} and an NDS-based analysis of ideal families~\cite{hachimori2025averagingproblemidealfamilies}.

\bigskip

Frankl's conjecture is usually phrased for union-closed families, but in this paper we work with the dual and state it for intersection-closed families.
This is because we intend to work within the framework of the closure systems.

\begin{definition}
A \emph{closure system} is a family of sets that is closed under intersection and contains the ground set.
\end{definition}

Frankl’s conjecture can be equivalently stated as follows: every closure system on a finite set that contains the empty set as a member has a \emph{rare} element.

%We introduce rooted sets because they provide a convenient combinatorial encoding of closure systems.
Closure systems can be represented using rooted sets. Rooted sets provide a convenient combinatorial encoding of closure systems.

\begin{definition}\label{def:rootedfamily}
A \emph{rooted set} is a pair $(A,r)$ consisting of a set $A\subseteq V$ and a
root $r\notin A$. We call $A$ the \emph{stem}. A collection of rooted sets is called a
\emph{family of rooted sets}.
\end{definition}

%In the usual definition of a rooted set one often includes $r$ in $A$, but in this paper we exclude $r$ and call $A$ the stem.
It is better to note that in many contexts a rooted set is defined in such a way that $r$ is included in $A$
(that is, $(A\cup\{r\}, r)$ is called a rooted set in such contexts), but in this paper we exclude the root $r$ from the stem $A$.

%The following lemma is well known and has an easy proof. 
%It is also formally proved by our Lean~4 code in our GitHub repository \cite{KashiwabaraRepo2025}.
The following lemma is well-known.
The proof is straightforward.
\begin{lemma} \label{lemma:generating_rooted_sets}
For a family of rooted sets $\{(A_i,r_i)\}$ on a finite set $V$,
\[
  \bigl\{\,F\subseteq V \;\bigm|\;
          A_i\subseteq F \ \Rightarrow\ r_i \in F \text{ for all } i\in I \bigr\}
\]
is a closure system. Conversely, every closure system admits a representation by a family of rooted sets.
\end{lemma}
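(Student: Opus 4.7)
The plan is to verify the two assertions separately. For the forward direction, I would show directly that the described family $\mathcal{F}$ is a closure system. First I would check that $V\in\mathcal{F}$: for every rooted set $(A_i,r_i)$, since $r_i\in V$ automatically, the implication $A_i\subseteq V\Rightarrow r_i\in V$ holds vacuously in its conclusion. Next, for intersection closure, take $F,F'\in\mathcal{F}$ and suppose $A_i\subseteq F\cap F'$; then $A_i\subseteq F$ and $A_i\subseteq F'$, so by hypothesis $r_i\in F$ and $r_i\in F'$, whence $r_i\in F\cap F'$. This shows $F\cap F'\in\mathcal{F}$, and the argument extends to arbitrary (finite) intersections.

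For the converse, I would fix a closure system $\mathcal{C}$ on $V$ and introduce its closure operator
\[
  \mathrm{cl}(A)\;=\;\bigcap\{\,C\in\mathcal{C}\,:\,A\subseteq C\,\},
\]
which is well-defined because $V\in\mathcal{C}$ and $\mathcal{C}$ is intersection-closed. I would then construct the family of rooted sets
\[
  \mathcal{R}\;=\;\bigl\{\,(A,r)\;:\;A\subseteq V,\ r\in\mathrm{cl}(A)\setminus A\,\bigr\}.
\]
Note that $r\notin A$ holds by construction, so each pair is a legitimate rooted set in the sense of Definition~\ref{def:rootedfamily}. Let $\mathcal{F}_{\mathcal{R}}$ denote the closure system generated by $\mathcal{R}$ via the formula in the lemma.

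The remaining task is to verify $\mathcal{F}_{\mathcal{R}}=\mathcal{C}$ by mutual inclusion. For $\mathcal{C}\subseteq\mathcal{F}_{\mathcal{R}}$, take $F\in\mathcal{C}$ and a pair $(A,r)\in\mathcal{R}$ with $A\subseteq F$; then $\mathrm{cl}(A)\subseteq F$ because $F$ is among the sets in $\mathcal{C}$ containing $A$, so $r\in\mathrm{cl}(A)\subseteq F$. Conversely, for $\mathcal{F}_{\mathcal{R}}\subseteq\mathcal{C}$, I would argue by contrapositive: if $F\notin\mathcal{C}$, then $F\subsetneq\mathrm{cl}(F)$, hence there exists $r\in\mathrm{cl}(F)\setminus F$, so $(F,r)\in\mathcal{R}$ witnesses that $F$ violates the defining implication and thus $F\notin\mathcal{F}_{\mathcal{R}}$.

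There is no real obstacle here; the statement is classical and essentially bookkeeping. The only point that deserves care is the disjointness condition $r\notin A$ imposed in this paper's convention for rooted sets, which is automatic from the choice $r\in\mathrm{cl}(A)\setminus A$. If one wished to be economical, the family $\mathcal{R}$ could be restricted to pairs with $A$ not a member of $\mathcal{C}$, but for the proof above this refinement is unnecessary.
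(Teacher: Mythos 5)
The paper declares this lemma's proof ``straightforward'' and omits it, so there is nothing in the text to compare against. Your argument is correct and is the standard one: the forward direction is a direct check of intersection-closure and membership of~$V$, and the converse uses the closure operator to produce, for every set not in $\mathcal C$, a rooted-set constraint it violates, while every member of $\mathcal C$ satisfies all constraints because $\mathrm{cl}(A)\subseteq F$ whenever $A\subseteq F\in\mathcal C$.
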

%The proof is straightforward.
%(A formal proof by Lean~4 code is provided in our GitHub repository \cite{KashiwabaraRepo2025}.)

The rooted set $(A_i, r_i)$ here means that a set $F$ of the closure system containing $A_i$ must also contain $r_i$. The lemma above states that a closure system is defined by constraints of this type.

Note that a closure system can be represented by different families of rooted sets. When a family $\mathcal A$ of rooted sets represents a closure system $\mathcal F$, we say $\mathcal A$ generates $\mathcal F$.

%In this paper we regard, for a finite ground set $V$, the whole family of ideals of a preordered set as the intersection–closed structure on families of subsets of $V$.

%In particular, if in a generating family of rooted sets each vertex has a unique root and every stem has size~$1$, then a preorder is induced on the vertices: define a binary relation by putting the root on the “smaller” side and the stem on the “larger” side, and take the reflexive–transitive closure. Under this correspondence, the members (hyperedges) of the closure system are exactly the ideals of the resulting preorder.

If in a generating family of rooted sets every stem has size $1$, such as $(\{w\},v)$, then the constraints for the closure system are of the type that a set $F$ containing an element $w$ must contain $v$.
This equivalently means that such a generating family of rooted sets defines  a preorder on $V$:
define a binary relation by $v\lessdot w$ if and only if $(\{w\},v)$ is in the generating family of rooted sets, and take the reflexive-transitive closure of this relation defines a preorder on $V$.
(A \emph{preorder} is a partial order without requiring antisymmetry (i.e., $x\le y$ and $y\le x$ do not imply $x=y$).)
Further, the closure system defined by such
a family of rooted sets corresponds to the
family of order ideals of the preordered set
as follows.

\begin{definition}%[Family of order ideals]
For a preordered set $(V,\le)$, 
$I\subseteq V$ is an \emph{order ideal} if 
$x\in I$ and $y\le x$ implies $y\in I$.
The \emph{family of order ideals} $\Ical(V,\le)$ is
\[
  \Ical(V,\le) := \{\, I \subseteq V \mid x \in I,\ y \le x \Rightarrow y \in I \,\}.
\]
\end{definition}

Note that both the empty set $\varnothing$ and the whole set $V$ always belong to $\Ical(V,\le)$.

\begin{lemma} \label{lem:preorder-ideal}
For a preordered set, the family of order ideals is a closure system. Moreover, the following are equivalent: being representable as a family of order ideals, and being generated by a family of rooted sets all of whose stems are singletons.
\end{lemma}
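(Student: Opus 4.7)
The plan is to dispatch the two claims in turn, each by a direct construction.

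For the first claim, I would verify from the definition that $\Ical(V,\le)$ is closed under intersection and contains $V$. The set $V$ is trivially an order ideal, and if $I_1,I_2$ are order ideals with $x\in I_1\cap I_2$ and $y\le x$, then the order-ideal property for each gives $y\in I_1$ and $y\in I_2$, so $y\in I_1\cap I_2$. Together with finiteness of $V$, this is enough to conclude that $\Ical(V,\le)$ is a closure system in the sense of the paper.

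For the equivalence, I would handle the two directions separately. In the forward direction, assume $\mathcal F=\Ical(V,\le)$ for some preorder and define the family
\[
\mathcal A \;=\; \bigl\{(\{x\},y) \,:\, y\le x,\ y\ne x\bigr\},
\]
every stem of which is a singleton. Unwinding Lemma~\ref{lemma:generating_rooted_sets}, membership $F\in\mathcal F'$ in the closure system generated by $\mathcal A$ amounts to: whenever $x\in F$ and $y\le x$ with $y\ne x$, also $y\in F$. Adding the vacuous case $y=x$, this is exactly the order-ideal condition, so $\mathcal F'=\Ical(V,\le)=\mathcal F$.

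In the backward direction, assume $\mathcal F$ is generated by a family $\{(\{v_i\},w_i)\}_{i\in I}$ of rooted sets with singleton stems, and let $\le$ be the reflexive-transitive closure of the relation $\{(w_i,v_i):i\in I\}$; this is a preorder on $V$ by construction. I would then show $\mathcal F=\Ical(V,\le)$ by two inclusions. If $F\in\Ical(V,\le)$, then from $w_i\le v_i$ we get $v_i\in F\Rightarrow w_i\in F$ for each $i$, so $F\in\mathcal F$. Conversely, for $F\in\mathcal F$, suppose $x\in F$ and $y\le x$. By definition of the reflexive-transitive closure, either $y=x$ (done) or there is a chain $y=z_0,z_1,\dots,z_n=x$ whose consecutive pairs $(z_j,z_{j+1})$ lie in the base relation, i.e.\ $z_j=w_{i_j}$ and $z_{j+1}=v_{i_j}$. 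A downward induction on $j$, applying the rooted set $(\{z_{j+1}\},z_j)\in\mathcal A$ at each step, propagates membership from $x=z_n\in F$ down to $y=z_0\in F$, establishing the order-ideal property.

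The only step with any content is this path-induction in the backward direction; the rest is essentially bookkeeping with definitions, so I expect no serious obstacle.
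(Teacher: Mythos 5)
Your proof is correct and follows essentially the same route as the paper: check the closure-system axioms directly, build a singleton-stem rooted-set family from the preorder for the forward direction, and take the reflexive-transitive closure of the induced relation for the backward direction. The only minor difference is that you use all pairs $y \le x$ (rather than the paper's cover pairs) when constructing $\mathcal A$, which is a safe and slightly more robust choice for general preorders.
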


\begin{proof}
That it is a closure system follows from the fact that the family of all order ideals is closed under intersection and contains the whole set.

If a closure system is represented as the family of order ideals of a preorder, then using the cover relation $x \lessdot y$ one obtains a generating family of rooted sets by taking $\{(\{y\},x)\}$. Conversely, if a closure system is represented by a family of rooted sets with singleton stems, one forms a binary relation by directing each rooted pair $(\{y\},x)$ as $x \le y$; its reflexive–transitive closure is a preorder whose order ideals are exactly the given closure system.
\end{proof}

% This family of order ideals is the class of set families we shall study, with a few additional conditions introduced below.

%\paragraph{Contribution of this paper.}
In this paper, we focus on \emph{functional preorders}, preorders arising from a function \(f\colon V\to V\).  
Given a function $f$, we define a preorder as the reflexive-transitive closure of the covering relations $v\lessdot f(v)$ for each $v\in V$ with $f(v)\neq v$.
Equivalently, a preorder is defined
such that the family of order ideals is generated by the family $\{\, (\{f(v)\}, v) : v\in V, f(v)\neq v\,\}$.
Our Main Theorem (Theorem~\ref{thm:main}) shows that the order-ideal family \(\Ical(V,\le)\) of such a preorder is always average-rare.  
The proof reduces to the Secondary Main Theorem (Theorem~\ref{thm:sub}), which handles the special case where the preorder is a rooted forest (a partial order where each element has at most one cover).  
All arguments have been formally verified in \textsc{Lean 4}, which can be found in our repository~\cite{KashiwabaraRepo2025}.

\bigskip
%\paragraph{Organisation.}
The rest of this paper is structured as follows.
Section~\ref{sec:preorder} introduces functional preorders and states the Main Theorem.  
Section~\ref{sec:secondary} outlines the reduction to rooted forests, and Section~\ref{sec:sub} provides the inductive proof of the Secondary Main Theorem.  
Section~\ref{sec:lean} summarizes the formal verification, and Section~\ref{sec:conclusion} discusses a conjectural extension 
% that would imply 
related to Frankl’s Conjecture.

% ---------------------------------------------------------------------------
% --- Functional Preorders and the Main Theorem -----------------------------
\section{Functional Preorders and the Main Theorem}\label{sec:preorder}

Let \(V\) be a finite set and let \(f\colon V\to V\) be a function on $V$.

\begin{definition}%[Preorder induced by \texorpdfstring{$f$}{f}]
Set \(v\mathrel{\lessdot}w \;\Leftrightarrow\; f(v)=w\) for $f(v)\neq v$. 
The reflexive–transitive closure defines a preorder~\(\le\).
%, where $v\mathrel{\lessdot}w$ is a cover relation.  
We call \((V,\le)\) the \emph{functional preorder} induced by \(f\).
\end{definition}

\begin{lemma}\label{lem:iterates}
For any \(v,w\in V\),
\[
  v\le w \;\Leftrightarrow\; \exists k\ge0 : f^{k}(v)=w,
\]
where \(f^{k}\) denotes the \(k\)-fold composition of \(f\) (with \(f^{0}\) the identity).
\end{lemma}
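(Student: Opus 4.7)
The plan is to prove the two implications separately by induction, each exploiting the inductive structure of one side of the equivalence.

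For the ($\Leftarrow$) direction, I will induct on $k$. The base case $k = 0$ gives $v = w$, so $v \le v$ by reflexivity of the preorder. For the inductive step, assume $f^{k}(v) = w$ implies $v \le w$; I want to conclude that $f^{k+1}(v) = w'$ implies $v \le w'$. Writing $w = f^{k}(v)$, the induction hypothesis gives $v \le w$, and $w' = f(w)$. If $f(w) = w$, then $w' = w$ and we are done. Otherwise $w \lessdot f(w) = w'$ by definition of the covering relation, so $w \le w'$, and transitivity gives $v \le w'$.

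For the ($\Rightarrow$) direction, I will induct on the construction of the reflexive-transitive closure. Concretely, I will show by structural induction that any pair $(v,w)$ in the closure admits some $k \ge 0$ with $f^{k}(v) = w$. The reflexive case is handled by $k = 0$. For a generating pair $v \lessdot w$, the defining condition $f(v) = w$ (with $f(v) \neq v$) immediately gives $k = 1$. For the transitivity step, if $v \le u$ and $u \le w$ are witnessed by $k_1$ and $k_2$ respectively, then composing yields $f^{k_1 + k_2}(v) = f^{k_2}(f^{k_1}(v)) = f^{k_2}(u) = w$, so $k = k_1 + k_2$ works.

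There is no genuine obstacle here; the only point that requires mild care is the asymmetry in the definition of $\lessdot$, which excludes fixed points of $f$. This does not cause any issue because fixed points are harmless in both directions: the ($\Leftarrow$) argument handles $f(w) = w$ explicitly, and the ($\Rightarrow$) argument never needs to invoke a covering step at a fixed point since reflexivity with $k = 0$ already covers $v = w$. In a formal development one might prefer to state the ($\Rightarrow$) direction via an auxiliary induction on the length of a chain $v = v_0 \lessdot v_1 \lessdot \cdots \lessdot v_n = w$ extracted from the closure, which makes the appearance of $f^{n}$ completely transparent.
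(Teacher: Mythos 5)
The paper states Lemma~\ref{lem:iterates} without proof, treating it as immediate from the definition of the functional preorder as the reflexive-transitive closure of $\lessdot$. Your two-sided induction (on $k$ for $\Leftarrow$, on the closure construction for $\Rightarrow$) is the natural way to fill in that omission, correctly handles the fixed-point exclusion in the definition of $\lessdot$, and is sound as written.
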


%\paragraph{Equivalence classes and maximal elements.}
Write \(v\sim w\) when \(v\le w\) and \(w\le v\); 
this defines an equivalence relation on $V$.
If we define a digraph, the \emph{functional graph} of $f$, on $V$ such that an arc is defined from $v$ to $w$ if $f(v)=w$ ($v\neq w$), 
then the equivalence classes are the strongly connected components of the functional graph of~\(f\).

\begin{definition}
In a preorder, an element \(u\in V\) is \emph{maximal} if \(u\le v\) implies \(v\le u\).
\end{definition}

\begin{lemma}\label{lem:max}
In a functional preorder, the elements of any equivalence class of size \(\ge2\) are maximal elements.
\end{lemma}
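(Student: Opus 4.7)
The plan is to exploit the cyclic structure that an equivalence class of size at least two forces on $f$. Let $u$ lie in an equivalence class $C$ with $|C|\ge 2$. First I would fix an element $v\in C$ distinct from $u$; since $u\sim v$, Lemma~\ref{lem:iterates} supplies integers $k,l\ge 0$ with $f^{k}(u)=v$ and $f^{l}(v)=u$. Because $u\neq v$, both $k\ge 1$ and $l\ge 1$, and composing gives $f^{p}(u)=u$ for $p:=k+l\ge 2$. Thus $u$ sits on a cycle of $f$ of length dividing $p$, and by induction $f^{jp}(u)=u$ for every $j\ge 0$.

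Next, to verify maximality, I would take an arbitrary $w$ with $u\le w$ and produce some $n\ge 0$ such that $f^{n}(w)=u$; by Lemma~\ref{lem:iterates} this yields $w\le u$. From $u\le w$ we obtain $m\ge 0$ with $w=f^{m}(u)$. Choosing $n\ge 0$ so that $n+m$ is a nonnegative multiple of $p$ (for instance $n=p\lceil m/p\rceil-m$), we get
$$ f^{n}(w)=f^{n+m}(u)=u, $$
so $w\le u$. This establishes the maximality of $u$, and since $u\in C$ was arbitrary, every element of $C$ is maximal.

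The argument poses no real obstacle once the right observation is made: the only conceptual step is noticing that the existence of a second element in the class forces $u$ to be a periodic point of $f$, after which every element above $u$ is automatically reached again by further iteration along the cycle and hence lies below $u$ as well. The bookkeeping with $n,m,p$ is routine modular arithmetic.
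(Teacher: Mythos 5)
Your proof is correct, and it takes a genuinely different route from the paper's. The paper argues by contradiction: assuming $u$ is non-maximal, it locates a ``boundary'' step $u'\lessdot v$ with $u'\sim u$ but $v\nleq u'$, then uses the existence of a second element $w\sim u'$ to force $v\le w\le u'$, a contradiction. You instead give a direct proof: the presence of a distinct $v\sim u$ together with Lemma~\ref{lem:iterates} forces $u$ to be a periodic point of $f$ with period $p\ge 2$, and then any $w$ above $u$ is $f^{m}(u)$ for some $m$, which can be iterated a further $n$ steps so that $n+m$ is a multiple of $p$, landing back at $u$ and giving $w\le u$. Your argument is more transparent about what is really going on (elements of a nontrivial equivalence class lie on a cycle of $f$), makes the quantifier bookkeeping explicit, and avoids the somewhat terse step in the paper's proof where the existence of the cover pair $(u',v)$ with $u'\sim u$ and $v\nleq u'$ is asserted without being spelled out. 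The paper's contradiction argument is shorter on the page but leaves more for the reader to fill in. Both are valid; yours is arguably the cleaner choice for a formalization.
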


\begin{proof}
Let \(u\) be an element of an equivalence class of size $\ge 2$ and not maximal.
Since \(u\) is non-maximal, there exist \(v\) and \(u'\) with 
%\(u'\mathrel{\lessdot} v\) 
\(f(u')=v\)
and $u'\sim u$ but \(v\nleq u'\). 
% That is, $f(u) = v$.
On the other hand, there exists $w\in V$ such that $w \neq u'$ and $w\sim u'$, i.e.,  $w \le u'$ and $u' \le w$. That means $f^k(u') = w$ for some $k>1$ by Lemma \ref{lem:iterates}. Then, $f^{k-1}(v) = w$ since $f^k(u')=(f^{k-1}\circ f)(u')=f^{k-1}(v)$. This means $v \le w$, and therefore $v \le u'$, which contradicts $v\nleq u'$.
%Because \(f\) has out-degree 1, this would force two distinct parents for some vertex, contradicting functionality.
\end{proof}

Note that the equivalence class of a maximal element $u$ is a singleton in a functional preorder induced by $f$ if and only if $f(u)=u$.

%A preorder is a partial order if there is no pair of distinct elements $u$ and $v$ with $u\sim v$. 
In a preorder, the condition that there is no pair of distinct elements $u$ and $v$ with $u\sim v$
is equivalent to the condition that the preorder satisfies antisymmetry.
In other words, a preorder is a partial order if every equivalence class is a singleton.
A partially ordered set is called a \emph{poset}.
A functional preorder is a \emph{functional partial order} if it is a partial order.
In a functional partial order induced by $f$, the maximal elements are those $u$ with $f(u)=u$.

\bigskip
% \subsection*{Illustrative examples}
The following three examples show preorders (partial orders) that are functional and not functional, respectively.
These are illustrative examples that indicate functionality is needed for the average-rarity
as in our main theorem.

\begin{example}[A two-element chain]
Let \(V=\{a,b\}\) with \(f(a)=b\) and \(f(b)=b\).  
This induces an order relation \(a<b\).  The order ideals of the preorder (in fact, partial order) $\le$ induced by $f$ are
\[
  \Ical(V,\le)=\{\varnothing,\{a\},\{a,b\}\},
\]
and
\(
  \nds(\Ical(V, \le))
  = 2\cdot (0+1+2)-3\cdot 2
  = 0,
\)
so the family is average-rare.
\end{example}

\begin{example}[The case of a preorder]
Let \(V=\{a,b,c\}\) with \(f(a)=b\), \(f(b)=c\), and \(f(c)=b\).  
Then a preorder is induced by $f$ such that \(a<b\), \(a<c\), and \(b\sim c\).  The order ideals of the preorder are
\[
  \Ical(V,\le)=\{\varnothing,\{a\},\{a,b,c\}\},
\]
and
\(
  \nds(\Ical(V, \le))
  = 2\cdot (0+1+3)-3\cdot 3
  = -1 < 0,
\)
so the family is average-rare.
\end{example}

\begin{example}[A non-functional rooted-set family]
\label{example:not_average_rare}
Take \(V=\{a,b,c\}\) and rooted sets \((\{b\},a)\), \((\{c\},a)\).  
% The resulting ideal family has five hyperedges of total size 8, 
This determines preorder (in fact, partial order) $\le$ induced by the two covering relations $a\lessdot b$ and $a\lessdot c$.
The resulting ideal family is
\[ \Ical(V,\le) =\{\varnothing, \{a\}, \{a,b\}, \{a,c\}, \{a,b,c\} \}\]
giving
\(
%  \nds = 2\cdot8-5\cdot3 = 1>0,
  \nds(\Ical(V,\le)) = 2\cdot (0+1+2+2+3)-5\cdot 3 = 1>0,
\)
hence it is \emph{not} average-rare. 
This is not functional because the element $a$ has two distinct covers, $b$ and $c$.
\end{example}

%\subsection*{Statement of the main result}
%The following is the statement of the main result.
The following is our main theorem.

\begin{theorem}[Main Theorem]\label{thm:main}
For any finite set \(V\) and function \(f\colon V\to V\),  
the order-ideal family \(\Ical(V,\le)\) of the induced functional preorder satisfies
\[
  \nds\bigl(\Ical(V,\le)\bigr)\le0.
\]
\end{theorem}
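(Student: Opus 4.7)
The plan is to reduce Theorem~\ref{thm:main} to the Secondary Main Theorem by quotienting out the equivalence relation $\sim$. Let $\pi\colon V\to \bar V:=V/{\sim}$ be the quotient map, and let $\bar\le$ denote the partial order on $\bar V$ induced from $\le$. Since $f$ permutes each equivalence class of size $\ge 2$, it descends to a well-defined function $\bar f\colon\bar V\to\bar V$ via $\bar f([v])=[f(v)]$ whose only cycles are fixed points; hence $(\bar V,\bar\le)$ is a functional partial order (equivalently, a rooted forest). Moreover, every order ideal $I$ of $(V,\le)$ is a union of equivalence classes (if $v\in I$ and $v'\sim v$, then $v'\le v$ forces $v'\in I$), so $\bar I\mapsto\pi^{-1}(\bar I)$ is a bijection between $\Ical(\bar V,\bar\le)$ and $\Ical(V,\le)$.

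Write $s(c):=|\pi^{-1}(c)|$ and let $d(c)$ denote the degree of $c$ in $\Ical(\bar V,\bar\le)$. A straightforward double-count through the bijection gives
\[
  \nds(\Ical(V,\le))\;=\;\sum_{c\in\bar V} s(c)\,\bigl(2d(c)-|\Ical(\bar V,\bar\le)|\bigr),
\]
which I would compare with $\nds(\Ical(\bar V,\bar\le))=\sum_{c\in\bar V}\bigl(2d(c)-|\Ical(\bar V,\bar\le)|\bigr)$. The crux is then an elementary lemma: for any maximal element $c$ in a finite poset, the map $\bar I\mapsto\bar I\setminus\{c\}$ injects ideals containing $c$ into ideals not containing $c$, whence $2d(c)\le|\Ical(\bar V,\bar\le)|$. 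By Lemma~\ref{lem:max}, every $c\in\bar V$ with $s(c)\ge 2$ is maximal in $\bar V$, so for such $c$ the bracketed factor is non-positive, and multiplying a non-positive quantity by $s(c)\ge 1$ only makes it smaller (equality when $s(c)=1$). Term by term this yields $\nds(\Ical(V,\le))\le\nds(\Ical(\bar V,\bar\le))$.

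Applying the Secondary Main Theorem to the rooted forest $(\bar V,\bar\le)$ gives $\nds(\Ical(\bar V,\bar\le))\le 0$, and combined with the previous inequality this yields $\nds(\Ical(V,\le))\le 0$. The step I expect to be most delicate is showing cleanly that $(\bar V,\bar\le)$ really is a functional partial order in the precise sense needed by the Secondary Main Theorem — checking that $\bar f$ is well-defined on classes, that the quotient preorder agrees with the preorder induced by $\bar f$, and that each non-sink class has a unique cover (so the quotient is a rooted forest). The remaining pieces (the ideal bijection, the double-count, and the two-line sign argument on non-positive summands) should be routine.
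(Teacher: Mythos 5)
Your proposal is correct, and it takes a genuinely different (though related) route from the paper's. The paper reduces to the Secondary Main Theorem by \emph{iterating} the trace operator: Lemma~\ref{lem:tracends} shows that removing a single element $u$ from a nontrivial equivalence class satisfies $\nds(\Ical(V,\le)) \le \nds(\Ical(V\setminus\{u\},\le'))$, and this step is repeated until every class is a singleton. Your version collapses all nontrivial classes \emph{at once} via the quotient $\pi\colon V\to \bar V = V/{\sim}$, uses the bijection $\bar I\mapsto\pi^{-1}(\bar I)$ between $\Ical(\bar V,\bar\le)$ and $\Ical(V,\le)$, and produces the closed-form comparison
\[
  \nds(\Ical(V,\le)) \;=\; \sum_{c\in\bar V} s(c)\bigl(2d(c)-N\bigr)
  \;\le\; \sum_{c\in\bar V} \bigl(2d(c)-N\bigr) \;=\; \nds(\Ical(\bar V,\bar\le)),
\]
where $N=|\Ical(\bar V,\bar\le)|$. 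The term-by-term sign argument is sound: Lemma~\ref{lem:max} guarantees maximality (in $(V,\le)$) of every element in a class of size $\ge 2$, and this transfers to maximality of the class in $(\bar V,\bar\le)$, so the bracket is nonpositive exactly where $s(c)\ge 2$ and agrees exactly where $s(c)=1$. The delicate points you flag — that $\bar f$ is well-defined on classes, that its induced preorder coincides with $\bar\le$, and that the quotient poset is functional — do check out: elements of a nontrivial class lie on a single $f$-cycle, so $f$ sends the class into itself and $\bar f$ is well-defined, and its only cycles become fixed points. What each approach buys: your quotient argument is conceptually cleaner and gives a transparent global identity; the paper's incremental trace is more modular (the one-step lemma is reusable and the induction is mechanical), which is likely why it was chosen for the Lean~4 formalization.
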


% The proof proceeds by reducing to rooted forests (the Secondary Main Theorem (Theorem~\ref{thm:sub}) and Lemma \ref{lem:forest}) in Section \ref{sec:secondary}, then applying an induction on \(|V|\) (Section~\ref{sec:sub}).  In rooted forests, maximal vertices are provably rare, and tracing out non-trivial equivalence classes never increases NDS—mechanisms made precise in Lemmas \ref{lem:trace}–\ref{lem:tracends}.

\bigskip
We prove the Main Theorem (Theorem~\ref{thm:main}) by reducing to the following Secondary Main Theorem (Theorem~\ref{thm:sub}).

\begin{theorem}[Secondary Main Theorem]\label{thm:sub}
Let $(V,\le)$ be a functional \emph{partial order}.
%—equivalently, a rooted forest.  
Then
\[
  \nds\bigl(\Ical(V,\le)\bigr)\le0.
\]
\end{theorem}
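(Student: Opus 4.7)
The plan is to induct on $|V|$, with the case split driven by the number of connected components of the functional graph of $f$. The base case $V = \varnothing$ is immediate, since $\Ical(\varnothing,\le) = \{\varnothing\}$ and $\nds = 0$.

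Because $(V,\le)$ is a partial order, no cycle of $f$ can have length $\ge 2$ (such a cycle would produce an equivalence class of size $\ge 2$), so every cycle of $f$ is a fixed point. Consequently each weakly connected component of the functional graph contains exactly one fixed point, which is the unique maximal element of that component. Suppose first that $V$ has $k \ge 2$ such components, giving an ordered disjoint union $V = V_1 \sqcup \cdots \sqcup V_k$ with no relations between different $V_i$. Ideals of $V$ are then independent choices of ideals in each component, so $\Ical(V,\le) \cong \prod_i \Ical(V_i)$, and a direct double count yields
\[
  \nds\bigl(\Ical(V,\le)\bigr) \;=\; \sum_{j=1}^k \Bigl(\prod_{i \neq j}\bigl|\Ical(V_i)\bigr|\Bigr)\,\nds\bigl(\Ical(V_j)\bigr).
\]
Each $V_j$ is strictly smaller than $V$, so the inductive hypothesis makes every $\nds(\Ical(V_j))$ nonpositive and hence $\nds(\Ical(V,\le)) \le 0$.

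In the remaining (single-component) case $f$ has a unique fixed point $r$, which is therefore the unique maximal element of $V$. Let $V' = V \setminus \{r\}$ and define $f'\colon V' \to V'$ by $f'(v) = f(v)$ when $f(v) \neq r$ and $f'(v) = v$ otherwise; the functional partial order induced by $f'$ is exactly the restriction $\le|_{V'}$ (the former children of $r$ become new roots in $V'$). Because $r$ is the unique maximum, the only ideal of $V$ containing $r$ is $V$ itself, while every ideal disjoint from $\{r\}$ is exactly an ideal of $V'$. Hence $\Ical(V,\le) = \Ical(V',\le|_{V'}) \sqcup \{V\}$, and a short bookkeeping computation produces the telescope
\[
  \nds\bigl(\Ical(V,\le)\bigr) \;=\; \nds\bigl(\Ical(V',\le|_{V'})\bigr) \;+\; |V| \;-\; \bigl|\Ical(V',\le|_{V'})\bigr|.
\]

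The inductive hypothesis handles the first summand: $\nds(\Ical(V',\le|_{V'})) \le 0$. Everything then rests on the key inequality $|\Ical(V',\le|_{V'})| \ge |V|$, which I would prove by exhibiting $|V'|+1 = |V|$ distinct ideals of $V'$: the empty set, together with the principal ideals $\{w \in V' : w \le v\}$ for each $v \in V'$. These are pairwise distinct because $v$ is the unique maximum element of its own principal ideal. The main point of care I anticipate is organizational: one must verify that $f'$ is well-defined on $V'$ and induces the restricted partial order, and that the multi-component reduction and the root-peeling reduction fit together inside a single induction on $|V|$. The arithmetic itself is a routine double count, and the principal-ideal lower bound is elementary.
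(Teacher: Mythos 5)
Your proposal is correct and follows essentially the same route as the paper: induction on $|V|$, with a product decomposition in the disconnected case and, in the connected case, peeling off the unique maximal element to obtain the identity $\nds(\Ical(V,\le)) = \nds(\Ical(V',\le')) + |V| - |\Ical(V')|$, then invoking the principal-ideal lower bound $|\Ical(V')| \ge |V'|+1$ (the paper's Lemma~\ref{lem:lower_bound}) to close the inequality. The only cosmetic difference is that you split into all $k$ components at once rather than the binary decomposition the paper uses, which is equivalent.
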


The reduction of the Main Theorem to the Secondary Main Theorem is given in Section~\ref{sec:secondary}, and 
the Secondary Main Theorem
will be proved in Section~\ref{sec:sub}.

We say a poset is a \emph{rooted forest} if its Hasse diagram is acyclic and each connected component has a unique maximal element,
where the unique maximal element is the \emph{root} of the component.
Here, we remark that a poset is functional
if and only if it is a rooted forest,
as shown in the following lemma.
Hence, the Secondary Main Theorem can be equivalently stated that the family of order ideals of a rooted forest is average-rare.

\begin{lemma}\label{lem:forest}
A partial order on a finite set is functional if and only if it is a rooted forest, i.e.,  its Hasse diagram is acyclic (as an undirected graph) and each connected component has a unique maximal element.
\end{lemma}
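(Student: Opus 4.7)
The plan is to prove both directions directly from the definitions, using the structure of the functional graph on one side and the tree-like structure of the Hasse diagram on the other.

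For the forward direction, assume $(V,\le)$ is a functional partial order induced by $f\colon V\to V$. Because the order is antisymmetric, every equivalence class of $\sim$ is a singleton, so the functional graph of $f$ (with self-loops removed) has no directed cycles of length $\ge 2$. Using finiteness of $V$ together with Lemma~\ref{lem:iterates}, for every $v\in V$ the iterates $v, f(v), f^2(v),\ldots$ must stabilize at some fixed point $r(v)$, i.e.\ an element $u$ with $f(u)=u$. By the remark following Lemma~\ref{lem:max}, the fixed points are exactly the maximal elements in this partial-order case. Define $B(r)=\{v\in V : r(v)=r\}$ for each fixed point $r$. The Hasse diagram of $(V,\le)$ has edges exactly $\{v, f(v)\}$ for $v$ with $f(v)\neq v$. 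Each such edge lies inside a single $B(r)$, since $r(v)=r(f(v))$; conversely, every $v\in B(r)$ is connected to $r$ by the path $v\lessdot f(v)\lessdot\cdots\lessdot r$. Hence the connected components of the Hasse diagram are exactly the basins $B(r)$. Within each $B(r)$ there are $|B(r)|-1$ edges (one per non-fixed point), so the connected component is a tree with unique maximal element $r$, proving that $(V,\le)$ is a rooted forest.

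For the reverse direction, assume $(V,\le)$ is a rooted forest. I first observe that every non-maximal element $v$ has a \emph{unique} cover: if $v$ had two distinct covers $w_1,w_2$, then both $w_1$ and $w_2$ lie in the same component as $v$, so they have (unique) upward paths to the common root $r$; concatenating the two $v$-to-$r$ paths through $w_1$ and $w_2$ produces a closed walk in the Hasse diagram containing the two distinct edges $\{v,w_1\}$ and $\{v,w_2\}$, from which an undirected cycle can be extracted, contradicting acyclicity. Now define $f\colon V\to V$ by letting $f(v)$ be this unique cover when $v$ is non-maximal, and $f(v)=v$ when $v$ is maximal. Then the covering pairs $\{(v,f(v)) : f(v)\neq v\}$ coincide exactly with the edges of the Hasse diagram, and taking the reflexive-transitive closure on both sides shows that the functional preorder induced by $f$ agrees with $\le$, completing the proof.

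The main obstacle is the acyclicity argument in the forward direction: one must be careful that the basin $B(r)$ really is connected \emph{and} has exactly $|B(r)|-1$ Hasse edges before concluding it is a tree. A clean way to handle this, which I expect to use in the Lean formalization as well, is to argue by induction on $|V|$ by peeling off a minimal element; this avoids any subtle path-extraction argument and turns the proof into a direct structural recursion on the functional graph.
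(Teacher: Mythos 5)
Your proof is correct, and the forward direction takes a genuinely different route from the paper's. The paper argues the forward direction by contradiction: orienting Hasse edges upward, functionality gives out-degree at most one; an undirected cycle would then force each cycle vertex to have exactly one outgoing edge \emph{on the cycle}, producing a directed cycle and hence a nonsingleton strongly connected component, contradicting antisymmetry; uniqueness of the maximal element is handled separately by locating a minimal vertex on a hypothetical path between two roots and observing it would need out-degree two. You instead build the forest structure directly: every vertex's iterates under $f$ terminate at a fixed point (the maximal elements of the poset), and the basins of attraction $B(r)$ simultaneously give the connected components, the unique root of each component, and the edge count $|B(r)|-1$ that certifies each component is a tree. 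This constructive decomposition is arguably cleaner and more informative than the contradiction argument, and it packages connectivity, uniqueness of the root, and acyclicity into a single structural fact. Your reverse direction is essentially the same as the paper's: both derive uniqueness of covers from uniqueness of paths in a tree to the common root, and then read off $f$.

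Two small details are worth making explicit, both of which you flag as the ``main obstacle'' and both of which do go through. First, that the $|B(r)|-1$ edges $\{v,f(v)\}$, one per non-root $v\in B(r)$, are pairwise distinct: a coincidence $\{v,f(v)\}=\{w,f(w)\}$ with $v\neq w$ would force $f(f(v))=v$, a $2$-cycle, contradicting antisymmetry. Second, that $\{v,f(v)\}$ with $f(v)\neq v$ is indeed a cover pair of $\le$: if some $z$ had $v<z<f(v)$, then Lemma~\ref{lem:iterates} would put $z=f^{j}(v)$ for some $j\ge 2$ and $f(v)$ on an $f$-cycle, hence a fixed point, forcing $z=f(v)$, a contradiction. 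With these filled in, your basin argument is complete and needs no fallback to the induction-on-$|V|$ variant you mention at the end.
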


\begin{proof}
Consider the Hasse diagram of the poset as a directed graph by orienting each edge of the Hasse diagram from the lower vertex to the upper vertex.
By this, the Hasse diagram of a poset will be an acyclic digraph (i.e., a digraph with no directed cycles).

Suppose first that $(V,\le)$ is functional. 
% that is, 
Then
every element has 
%at most one immediate successor (cover) in its Hasse diagram.  
at most one outgoing edge in its Hasse diagram.
If the Hasse diagram contains a (undirected) cycle, then, 
% since $V$ is finite, 
each vertex on the cycle has exactly one outgoing edge to another vertex on the cycle, forming a strongly connected component of size at least two. 
This contradicts the antisymmetry of the partial order.  
Hence, the Hasse diagram is acyclic as an undirected graph.

Moreover, in a finite acyclic digraph,
%where every vertex has out-degree at most one
each connected component must contain a sink, a vertex of out-degree zero.  
Such a vertex is a maximal element of that component.  
%If there were two distinct maximal elements in the same component, they could not be comparable, and there would be no path between them in the Hasse diagram, contradicting connectedness of the component.  
If there are two distinct maximal elements in the same component, there is an undirected path connecting the two vertices on the Hasse diagram. Then, a minimal element among the vertices of the path is found on an internal vertex of the path, % since otherwise it contradicts the fact that the two end vertices are maximal elements.
since the end vertices are maximal elements and cannot be minimal.
This minimal element must have 
% two different successors, 
two outgoing edges,
contradicting the functionality.
Thus, each connected component has a unique maximal element.

Conversely, assume that the Hasse diagram is acyclic as an undirected graph and that each connected component has a unique maximal element $r$.  
Acyclicity implies that each component is a finite tree.
%oriented toward its maximal element.  
In each tree, from every vertex there exists a path to $r$ going upward, since $r$ is the unique maximal element in the component. Since there exists only one path between two vertices in a tree, this shows that each vertex has at most one 
% immediate successor 
outgoing edge
in the Hasse diagram.  
Therefore, the partial order is functional.
This proves the equivalence.
\end{proof}

If the Hasse diagram of a poset $(V,\le)$ has two or more connected components, then the poset can be expressed as a disjoint union $(C_1, \le_1) + (C_2,\le_2)$ with $V=C_1\sqcup C_2$, where $\le_1$ and $\le_2$ are the restrictions of the order relation $\le$ to $C_1$ and $C_2$, respectively.
(The disjoint union $(C_1, \le_1) + (C_2,\le_2)$ is the poset on $V=C_1\sqcup C_2$ with the order relation $\le$ such that $x\le y$ if $x,y\in C_1$ and $x\le_1  y$, or if $x,y\in C_2$ and $x\le_2 y$.)

% ---------------------------------------------------------------------------
% --- Secondary Main Theorem and Reduction ----------------------------------
\section{Reduction to Secondary Main Theorem}\label{sec:secondary}

In this section, we reduce the proof of the Main Theorem (Theorem~\ref{thm:main}) to that of the Secondary Main Theorem (Theorem~\ref{thm:sub}). 
% Since the proof of the Secondary Main Theorem is the base case of the Main Theorem, we show the inductive case in this section.

% -- Rare element for maximal vertices --------------------------------------
\begin{lemma}\label{lem:rare}
For any maximal element $u$ of a preordered set $(V, \le)$, $u$ is a rare element in
$\Ical(V,\le)$.
\end{lemma}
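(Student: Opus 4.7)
The plan is to exhibit an explicit injection from the family of ideals containing $u$ into the family of ideals \emph{not} containing $u$, which immediately yields $\deg(u)\le|\Ical(V,\le)|/2$, i.e.\ rareness.

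The key preliminary observation concerns the equivalence class $[u]:=\{v\in V: v\sim u\}$. Because $u$ is maximal, $u\le v$ forces $v\sim u$, so $[u]=\{v:u\le v\}$. From the defining property of an order ideal one then checks the following dichotomy: for every $I\in\Ical(V,\le)$, either $[u]\subseteq I$ or $[u]\cap I=\varnothing$. Indeed, if some $v\in[u]$ lies in $I$, then $u\le v$ together with downward closure gives $u\in I$, and similarly each $w\in[u]$ satisfies $w\le u$ so $w\in I$.

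With this in hand, I would define
\[
  \varphi:\{I\in\Ical(V,\le):u\in I\}\longrightarrow\{I\in\Ical(V,\le):u\notin I\},
  \qquad \varphi(I):=I\setminus[u].
\]
Two things need verification. First, $\varphi(I)$ must be an order ideal: if $x\in I\setminus[u]$ and $y\le x$, then $y\in I$ by downward closure, and $y\notin[u]$ because $y\in[u]$ would yield $u\le y\le x$, hence $x\sim u$ by maximality of $u$, contradicting $x\notin[u]$. Second, $\varphi$ is injective: by the dichotomy above, any ideal $I$ containing $u$ contains all of $[u]$, so $I$ is recovered from $\varphi(I)$ as $\varphi(I)\cup[u]$.

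The injection gives $|\{I:u\in I\}|\le|\{I:u\notin I\}|$, whence $\deg(u)\le|\Ical(V,\le)|/2$, proving $u$ is rare. I do not foresee a genuine obstacle; the only subtle point is that downward closure combined with maximality is exactly what forces the ``all or nothing'' behaviour of ideals on the equivalence class $[u]$ and simultaneously makes the deletion map well defined.
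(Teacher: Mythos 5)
Your proposal is correct and takes essentially the same approach as the paper: both remove the equivalence class $[u]$ from each ideal containing $u$ and use the ``all-or-nothing'' behaviour of ideals on $[u]$ to obtain injectivity. Your write-up just spells out the dichotomy and the ideal-closure check a bit more explicitly than the paper does.
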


\begin{proof}
Let $U$ be the equivalence class containing $u$.  Define a map
\[
  \Phi : \{\,I\in\Ical(V,\le)\mid u\in I\,\} \longrightarrow
         \{\,J\in\Ical(V,\le)\mid u\notin J\,\}%,
%  \qquad
%  \Phi(I) := I \setminus U.
\]
by 
\[ \Phi(I) := I \setminus U. \]
Here, $\Phi(I)$ is an order ideal not containing $u$ by maximality of $u$,
and if
$\Phi(I_1)=\Phi(I_2)$ then $I_1=I_2$,
since every element of $U$ appears 
% in exactly the same ideals.  
in the same ideals simultaneously.
This implies that $\Phi$ is injective, and hence,
the number of ideals containing $u$ is at most the number not containing $u$.
Therefore, $\deg_{\Ical}(u) \le \tfrac12 |\Ical(V,\le)|$, i.e., $u$ is rare.
\end{proof}
% ---------------------------------------------------------------------------

%We now explain how the Main Theorem is reduced to a rooted-forest (or partial order) statement, the
%\emph{Secondary Main Theorem}.  Three ingredients are involved:

%\begin{enumerate}
%  \item[\emph{(A)}] \emph{Parallel‐element tracing.}  
%        Collapsing an equivalence class of size $\ge2$ by a trace operation
%        never increases the normalised degree sum (NDS).
%  \item[\emph{(B)}] \emph{Forest structure.}  
%        After repeatedly tracing all non-trivial classes, the preorder becomes
%        a rooted forest: a partial order whose Hasse diagram is acyclic and has
%        a unique maximal element in each connected component.
%  \item[\emph{(C)}] \emph{Rooted forests are average-rare.}  
%        Section~\ref{sec:sub} proves that every rooted-forest poset satisfies
%        $\nds(\Ical)\le0$.
%\end{enumerate}

%Combining (A)–(C) yields the Main Theorem.

%\subsection*{Parallel elements and the trace operation}

\begin{definition}%[Parallel elements]
Given a set family $\mathcal F$ on $V$, two elements $u,v\in V$ are
\emph{parallel in $\mathcal F$} if
\[
  \{F\in\mathcal F\mid u\in F\}
  =
  \{F\in\mathcal F\mid v\in F\}.
\]
We say $v$, different from $u$, is a \emph{parallel partner} of $u$ if $u$ and $v$ are parallel.
\end{definition}

The following lemma is straightforward.
\begin{lemma}\label{lem:parallel-equivalence}
For the family of order ideals $\Ical(V,\le)$ for any preorder,  
$u\sim v$ (i.e., both $u\le v$ and $u\ge v$) if and only if $u$ and $v$ are parallel in $\Ical(V,\le)$.
\end{lemma}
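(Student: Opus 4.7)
The plan is to prove both directions directly from the definition of an order ideal, using principal ideals as the key witness for the backward direction.

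For the forward direction, I would assume $u \sim v$, meaning $u \le v$ and $v \le u$, and take an arbitrary $I \in \Ical(V,\le)$. If $u \in I$, then since $v \le u$, the order-ideal property immediately gives $v \in I$, and by the symmetric argument $v \in I$ implies $u \in I$. Hence $\{I : u \in I\} = \{I : v \in I\}$, so $u$ and $v$ are parallel.

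For the backward direction, the key observation is that for any $w \in V$ the principal order ideal $\downarrow w := \{x \in V : x \le w\}$ is itself an element of $\Ical(V,\le)$: if $x \in \downarrow w$ and $y \le x$, then transitivity gives $y \le w$, so $y \in \downarrow w$. Now assume $u$ and $v$ are parallel. Since $u \in \downarrow u$, parallelism forces $v \in \downarrow u$, i.e.\ $v \le u$. Symmetrically, $u \in \downarrow v$, so $u \le v$. Together these give $u \sim v$.

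There is no real obstacle here; the only point worth double-checking is that $\downarrow u$ is a legitimate order ideal (which uses transitivity of the preorder) and that $u \in \downarrow u$ (which uses reflexivity). Both are built into the definition of a preorder, so the argument goes through in two short steps per direction. This matches the author's assertion that the lemma is straightforward.
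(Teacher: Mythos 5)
Your proof is correct and supplies exactly the standard argument the paper omits as ``straightforward'': the forward direction is a direct application of the downward-closure property of ideals, and the backward direction instantiates parallelism at the principal ideals $\downarrow u$ and $\downarrow v$, using reflexivity to get membership and transitivity to confirm these are ideals. Nothing is missing.
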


\bigskip
In the reduction, we use the following operator to the family of order ideals of the functional preorder.
\begin{definition}%[Trace]
\label{def:trace}
For $x\in V$ and a family $\mathcal F\subseteq2^{V}$, 
we define the \emph{traced} family at $x$ by
\[
  \operatorname{trace}_x(\mathcal F)
    := \{\,F\setminus\{x\}\mid F\in\mathcal F\,\}.
\]
\end{definition}

The following is the key lemma in the reduction.
\begin{lemma}%[Injectivity of trace]
\label{lem:trace}
If $u$ has a parallel partner in $\mathcal F$,  
the \emph{trace map} 
\[ \Phi_u:{\mathcal F} \rightarrow \operatorname{trace}_u(\mathcal F) \]
defined by 
$F\mapsto F\setminus\{u\}$ is injective 
% on the hyperedges of $\mathcal F$.
\end{lemma}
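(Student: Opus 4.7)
The plan is to prove injectivity directly from the definition of a parallel partner. Let $v\neq u$ be the parallel partner guaranteed by the hypothesis, so that by definition $u\in F \Leftrightarrow v\in F$ for every $F\in\mathcal F$. I would take two sets $F_1,F_2\in\mathcal F$ with $\Phi_u(F_1)=\Phi_u(F_2)$, i.e.\ $F_1\setminus\{u\}=F_2\setminus\{u\}$, and show $F_1=F_2$. Since $F_1$ and $F_2$ can only differ at $u$, it suffices to check that $u\in F_1 \Leftrightarrow u\in F_2$.

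The key step is to transfer the question of $u$-membership to $v$-membership via the parallel relation, exploiting that $v\neq u$ means $v$-information survives the trace. Concretely, suppose for contradiction that $u\in F_1$ but $u\notin F_2$. Using that $u$ and $v$ are parallel in $\mathcal F$, we get $v\in F_1$ and $v\notin F_2$. Because $v\neq u$, both $v\in F_1\setminus\{u\}$ and $v\notin F_2\setminus\{u\}$, contradicting the assumption $F_1\setminus\{u\}=F_2\setminus\{u\}$. The symmetric case is identical, so $u\in F_1 \Leftrightarrow u\in F_2$, and combined with the equality of the traces this yields $F_1=F_2$.

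I do not expect any substantive obstacle: the argument is essentially a one-line observation that a parallel partner $v\neq u$ acts as a ``witness coordinate'' that is preserved by deleting $u$. The only subtlety worth highlighting is that the definition of parallel partner requires $v\neq u$; without this, the whole trick collapses, since one could not guarantee that $v$ lies in $F_1\setminus\{u\}$. Everything else is a routine case distinction, and the proof should be two or three lines in the final write-up.
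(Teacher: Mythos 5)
Your proof is correct and follows essentially the same approach as the paper: reduce to the case where the two sets differ only at $u$, then use parallelism to derive a contradiction via the partner $v$, which survives the trace since $v\neq u$. The paper's version is more terse but the argument is identical.
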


\begin{proof}
Assume $I_1\setminus\{u\}=I_2\setminus\{u\}$.  If $I_1\ne I_2$ they differ only
by $u$.  But then their memberships of the parallel partner $v$ would also
differ, contradicting parallelism.  Hence $I_1=I_2$.
\end{proof}

This implies that, if $u$ has a parallel partner,
then tracing preserves the number of sets and all degrees except that
of $u$.  
% Consequently:
This is the key property to show the following lemma.

\begin{lemma}\label{lem:tracends}
Let $(V,\le)$ be a functional preorder and let $u$ lie in an equivalence class of size at least $2$.  
Write $V' := V \setminus \{u\}$.  
Then:
\begin{enumerate}
  \item[(i)] 
  % the traced family $\Ical(V',\le)$ arises from a functional preorder, and
  %\color{red}
  there is a function $g:V'\rightarrow V'$ such that the preorder $\le'$ on $V'$ induced by $g$ has 
  \( \Ical(V', \le') = \operatorname{trace}_u(\Ical(V, \le))\)
  (the preorder $\le'$ is functional induced by $g$),
  and
  %\color{black}
  \item[(ii)] $\displaystyle \nds\bigl(\Ical(V,\le)\bigr) \le \nds\bigl(\Ical(V',\le')\bigr)$.
\end{enumerate}
\end{lemma}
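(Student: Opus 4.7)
The plan is to split the lemma into an explicit construction for (i) and a short counting argument for (ii). For (i), I will build $g$ by ``short-circuiting'' the cycle through $u$, and then verify that its induced preorder has the claimed ideal family. For (ii), the bound will follow from $u$ being rare (Lemma~\ref{lem:rare}) combined with the bijectivity of the trace map guaranteed by Lemma~\ref{lem:trace}.

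Since $u$ lies in an equivalence class $U$ of size at least $2$, Lemma~\ref{lem:max} makes it maximal. In the functional graph, each nontrivial strongly connected component is a directed cycle, so $f(u)\in U\setminus\{u\}$; in particular $f(u)\ne u$, and $v:=f(u)$ lies in $V':=V\setminus\{u\}$. I define
\[
  g(w)=\begin{cases} v & \text{if } f(w)=u, \\ f(w) & \text{otherwise,} \end{cases}
\]
which indeed maps $V'$ to $V'$: if $f(w)=u$ then $g(w)=v\in V'$, otherwise $g(w)=f(w)\ne u$. Let $\le'$ be the functional preorder induced by $g$.

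The heart of the proof is the identity $\Ical(V',\le')=\operatorname{trace}_u(\Ical(V,\le))$. The forward inclusion is a case analysis on a cover $w\lessdot' g(w)$ of $\le'$: if $f(w)\ne u$ it transfers directly from the ideal property of $\le$; if $f(w)=u$, the relevant cover becomes $w\lessdot' v$, and I use $u\sim v$ together with $w\le u$ to carry the ideal property over modulo $u$. For the reverse inclusion, given an ideal $J$ of $\le'$, I set the preimage to be $J\cup\{u\}$ when $v\in J$ and $J$ otherwise. The point where the choice $v=f(u)$ becomes essential is the cover $u\lessdot f(u)$ of $\le$: this cover has no counterpart in $\le'$, so its ideal condition $f(u)\in I\Rightarrow u\in I$ must be enforced by the case split itself, and it is precisely $v\in J\Rightarrow u\in J$ that aligns with the definition. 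I expect this verification, and specifically convincing oneself that an arbitrary element of $U\setminus\{u\}$ would \emph{not} work in place of $f(u)$, to be the main obstacle.

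For (ii), I appeal to Lemmas~\ref{lem:parallel-equivalence} and~\ref{lem:trace}: $u\sim v$ makes $u$ and $v$ parallel in $\Ical(V,\le)$, so $\Phi_u$ is injective and $|\Ical(V',\le')|=|\Ical(V,\le)|$. Tracing at $u$ reduces each set's size by $1$ exactly when $u\in I$, so $\sum_{J\in\Ical(V',\le')}|J|=\sum_{I\in\Ical(V,\le)}|I|-\deg(u)$, while $|V'|=|V|-1$. Subtracting, the difference simplifies to
\[
  \nds(\Ical(V,\le))-\nds(\Ical(V',\le'))=2\deg(u)-|\Ical(V,\le)|,
\]
which is $\le 0$ since Lemma~\ref{lem:rare} gives $\deg(u)\le\tfrac12|\Ical(V,\le)|$. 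This completes the plan.
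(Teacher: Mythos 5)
Your proof is correct and takes essentially the same approach as the paper: you define the same function $g$ (your $g(w)=f(u)$ when $f(w)=u$ coincides with the paper's $g(x)=f^2(x)$ since $f^2(w)=f(u)$ in that case), use the same case-split $I'\cup\{u\}$ versus $I'$ to realize the reverse inclusion, and close part~(ii) with the identical counting argument combining injectivity of $\Phi_u$ (Lemmas~\ref{lem:parallel-equivalence} and~\ref{lem:trace}) with rarity of the maximal element $u$ (Lemmas~\ref{lem:max} and~\ref{lem:rare}). Your extra remarks on why $f(u)\neq u$ and why the specific choice $v=f(u)$ (rather than an arbitrary member of the equivalence class) is needed are accurate and a useful addition that the paper leaves implicit.
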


Remark that (i) of the lemma states that $(V',\le')$ is a functional preorder.

\begin{proof}
% \textbf{(1) Preservation of functionality under tracing.}  
\mbox{}
\begin{enumerate}
\item[(i)]
\noindent 
\begin{comment}
Let $(V,\le)$ be a functional preorder, so every element has at most one immediate successor in the Hasse diagram.  
The trace operation removes $u$ from all ideals and restricts the order to the vertex set $V' := V \setminus \{u\}$.  
Formally, the induced order $\le'$ on $V'$ is the restriction of $\le$ to $V' \times V'$.  
If $x,y \in V'$ are such that $x \lessdot' y$ in the Hasse diagram of $(V',\le')$, then $x \lessdot y$ in $(V,\le)$ and both $x$ and $y$ are different from $u$.  
Since $(V,\le)$ is functional, $x$ has at most one such $y$ in $V$, hence at most one in $V'$.  
Thus $(V',\le')$ is also functional.
\end{comment}
%\color{red}
Let the preordered set $(V,\le)$ be induced by $f:V\rightarrow V$, and
let $g: V'\rightarrow V'$ be defined by $g(x)=f(x)$ when $f(x)\neq u$ and $g(x)=f^2(x)$ when $f(x)=u$.
It can be easily verified that the preordered set $(V', \le')$ induced by this $g$ is a restriction of $(V,\le)$ on $V'$. This $g$ is a required function.
Indeed, if $I\in \Ical(V, \le)$, then it is easy to see that $I\setminus \{u\}$ is an order ideal in $\Ical(V', \le')$.
On the other hand, if $I'\in \Ical(V', \le')$, then by letting
\[ I = \begin{cases} I'\cup\{u\} & \text{if $f(u)\in I'$,} \\ I' & \text{otherwise}. \end{cases} \]
$I$ is an order ideal in $\Ical(V, \le)$ and $I\setminus \{u\} = I'$.
Hence, $\Ical(V', \le') = \operatorname{trace}_u(\Ical(V, \le))$.
%\color{black}

%\medskip
%\textbf{(2) Inequality for the normalised degree sum.}  
\item[(ii)]
By Lemma~\ref{lem:trace}, the trace map 
\[
  \Phi_u : \Ical(V,\le) \to \Ical(V',\le'), \quad I \mapsto I \setminus \{u\}
\]
is injective when $u$ has a parallel partner.  
Therefore $|\Ical(V,\le)| = |\Ical(V',\le')|$.

% For the sum of ideal sizes, note that removing $u$ from all ideals decreases the total size sum exactly by $\deg(u)$, the number of ideals containing $u$.  
% Since $|V| = |V'| + 1$, we compute:
Since $\Ical(V',\le')$ is the result of removing $u$ from all ideals in $\Ical(V,\le)$, we have
\[ \sum_{I\in \Ical(V,\le)} |I| = \sum_{I'\in \Ical(V',\le')} |I'|+\deg(u),\]
and we have $|V| = |V'|+1$. Hence,
\begin{align*}
\nds(\Ical(V,\le))
 &= 2\cdot\sum_{I \in \Ical(V,\le)} |I| - |\Ical(V,\le)|\,|V| \\
 &= 2\left( \sum_{I' \in \Ical(V',\le')} |I'| + \deg(u) \right) - |\Ical(V',\le')|\,(|V'| + 1) \\
 &= \left[ 2\cdot\sum_{I' \in \Ical(V',\le')} |I'| - |\Ical(V',\le')|\,|V'| \right] + 2\deg(u) - |\Ical(V,\le)| \\
 &= \nds(\Ical(V',\le')) + 2\deg(u) - |\Ical(V,\le)|.
\end{align*}

By Lemma~\ref{lem:max}, $u$ is a maximal element of $(V,\le)$, and by Lemma~\ref{lem:rare}, $u$ is rare in $\Ical(V,\le)$.  
The rarity of $u$ gives $2\deg(u) \le |\Ical(V,\le)|$, and substituting this into the above expression yields
\[
\nds(\Ical(V,\le)) \le \nds(\Ical(V',\le')),
\]
as desired.
\end{enumerate}
\end{proof}
\begin{comment}
\begin{lemma}\label{lem:tracends}
Let $(V,\le)$ be a functional preorder and let
$u$ lie in an equivalence class of size $\ge2$.  
Write $V'=V\setminus\{u\}$.  Then
\begin{enumerate}
  \item the traced family $\Ical(V',\le)$ arises from a functional preorder, and
  \item $\displaystyle\nds\bigl(\Ical(V,\le)\bigr)\le
         \nds\bigl(\Ical(V',\le)\bigr).$
\end{enumerate}
\end{lemma}

\begin{proof}[Sketch]
Injectivity (Lemma~\ref{lem:trace}) gives
$|\Ical(V,\le)|=|\Ical(V',\le)|$.  The degree of every vertex
other than $u$ is unchanged; the degree of $u$ disappears, lowering the total
ideal–size sum by $\deg(u)$.  A short calculation shows
\[
  \nds(\Ical(V,\le))
    = \nds(\Ical(V',\le))
      + 2\deg(u) - |\Ical(V,\le)|.
\]
Since $u$ is maximal (Lemma~\ref{lem:max}) and rare
(Lemma~\ref{lem:rare}), $2\deg(u)\le|\Ical(V,\le)|$, yielding the inequality.
\end{proof}
\end{comment}

%\subsection*{Proof of Secondary Main Theorem}

% Secondary Main Theorem was here previously

\bigskip
\noindent
\begin{proof} [Proof of the Main Theorem from the Secondary Main Theorem. ]
\mbox{}\\
% The proof is an induction on $|V|$ carried out in Section~\ref{sec:sub}.  
With Lemma~\ref{lem:tracends} in hand, repeatedly
tracing out nontrivial equivalence classes reduces
the size of each equivalence class to one.
This reduces the Main Theorem (Theorem~\ref{thm:main}) 
%(for arbitrary functional preorders) 
to the Secondary Main Theorem (Theorem~\ref{thm:sub})
since a preorder such that each equivalence class is a singleton is a partial order.
%\hfill $\Box$
\end{proof}

What remains is the proof of the Secondary Main Theorem (Theorem~\ref{thm:sub}), which is given in Section~\ref{sec:sub}.

% ---------------------------------------------------------------------------
% --- Proof of the Secondary Main Theorem -----------------------------------
\section{Proof of the Secondary Main Theorem}\label{sec:sub}

We now prove the Secondary Main Theorem (Theorem~\ref{thm:sub}), namely,
$\nds\bigl(\Ical(V,\le)\bigr)\le 0$
for every rooted-forest poset $(V,\le)$.
The argument proceeds by induction on the number of vertices~$n=\lvert V\rvert$,
using four technical lemmas.

%\subsection{Removing the unique root}
For the following lemma, note that a connected rooted-forest poset, i.e., a rooted-forest poset such that the Hasse diagram is connected, has a unique maximal element by Lemma~\ref{lem:forest}.

\begin{lemma}%[Root deletion]
\label{lem:deletion}
Let $(V,\le)$ be a \emph{connected} rooted-forest poset. Let the unique maximal
element be $x$, and denote $V' := V\setminus\{x\}$. 
Then $ (V',\le') $ is a
functional poset,
where $\le'$ is a partial order restricting the order relation $\le$ to $V'$,
and
\[
  \nds\bigl(\Ical(V,\le)\bigr)
  \;=\;
  \nds\bigl(\Ical(V',\le')\bigr)
  \;+\;
  \bigl(\lvert V\rvert - \lvert\Ical(V,\le)\rvert + 1\bigr).
\]
\end{lemma}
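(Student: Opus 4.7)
The plan is to establish two facts separately and then do the arithmetic. First, that removing the unique maximal element $x$ from the connected rooted forest leaves a functional poset. Second, that the ideal family $\Ical(V,\le)$ decomposes cleanly into $\{V\}$ together with $\Ical(V',\le')$. Once those are in place, the stated identity is a direct computation.

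For the first step, I would argue that since $(V,\le)$ is a rooted forest, each element has at most one cover in the Hasse diagram by Lemma~\ref{lem:forest}. Restricting $\le$ to $V'$ preserves this property: the only covers that can disappear are those whose upper endpoint is $x$, and removing $x$ simply turns elements that were covered by $x$ into new roots. Thus $(V',\le')$ is a (possibly disconnected) rooted forest, hence functional by Lemma~\ref{lem:forest}.

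For the second step, I use that $x$ is the \emph{unique} maximal element of the connected poset, so every $y\in V$ satisfies $y\le x$. Any ideal $I\in\Ical(V,\le)$ containing $x$ must therefore contain every element, so $I=V$. Conversely, any ideal $I\in\Ical(V,\le)$ not containing $x$ is contained in $V'$, and its ideal condition with respect to $\le$ restricts to the ideal condition with respect to $\le'$; and any $I'\in\Ical(V',\le')$ is already an ideal of $(V,\le)$, because the only element of $V$ not in $V'$ is $x$, and $x$ cannot lie below any $y\in V'$ (else $y=x$). This gives the disjoint decomposition
\[
  \Ical(V,\le) \;=\; \{V\} \;\sqcup\; \Ical(V',\le'),
\]
so $|\Ical(V,\le)| = |\Ical(V',\le')| + 1$ and $\sum_{I\in\Ical(V,\le)} |I| = |V| + \sum_{I'\in\Ical(V',\le')} |I'|$.

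The final step is to plug these into the definition of $\nds$. Writing $N := |\Ical(V,\le)|$ and $n := |V|$, one gets
\[
  \nds(\Ical(V,\le)) - \nds(\Ical(V',\le'))
  \;=\; 2n - Nn + (N-1)(n-1)
  \;=\; n - N + 1,
\]
which is exactly the claimed expression. I do not expect any genuine obstacle here; the only subtlety is carefully justifying that $x$ is indeed a maximum (not just maximal) — this is where connectedness of the Hasse diagram and Lemma~\ref{lem:forest} are used, to ensure every element lies below $x$.
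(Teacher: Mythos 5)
Your proposal is correct and follows essentially the same route as the paper: you split off $\{V\}$ as the unique ideal containing $x$, identify the remaining ideals with $\Ical(V',\le')$, and carry out the same arithmetic. The only cosmetic difference is in the functionality step, where you argue via the ``at most one cover'' description of the restricted Hasse diagram, while the paper instead exhibits the inducing function $g$ on $V'$ explicitly (redirecting $f$-values equal to $x$ to fixed points); these are two phrasings of the same fact.
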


\begin{proof}
Since $(V, \le)$ is a connected rooted forest, there is a function $f:V\rightarrow V$ that induces the partial order, with $f(x)=x$ for the unique maximal element $x$.

Let $g:V'\rightarrow V'$ be defined by
\[
  g(v)=\begin{cases} f(v) & \text{if $f(v)\neq x$,} \\
  v & \text{if $f(v)=x$.} \end{cases}
\]
Then it is easily verified that the order relation induced by $g$ is exactly the order relation $\le'$ that restricts the order relation induced by $g$ on $V'=V\setminus\{x\}$.
Hence $(V',\le')$ is a functional poset.
%\color{black}

\medskip
\begin{comment}
\textbf{(B) Classification of ideals with respect to $x$.}
Because $(V,\le)$ is connected with unique maximal element $x$, every
$y\in V$ satisfies $y\le x$. Let $I\in\Ical(V,\le)$.
\begin{itemize}
  \item If $x\in I$, then $I$ is downward closed, hence contains every
        $y\le x$, so $I=V$.
  \item If $x\notin I$, then $I\subseteq V'$ and $I$ is downward closed
        in $(V',\le')$, i.e.\ $I\in\Ical(V',\le')$.
\end{itemize}
Conversely, any $J\in\Ical(V',\le')$ is an ideal of $(V,\le)$ not containing
$x$. Thus inclusion yields a bijection
\[
  \Ical(V',\le') \;\longleftrightarrow\;
  \{\,I\in\Ical(V,\le)\mid x\notin I\,\},
\]
and therefore
\[
  \lvert\Ical(V,\le)\rvert \;=\; \lvert\Ical(V',\le')\rvert + 1,
  \qquad
  \sum_{I\in\Ical(V,\le)} \lvert I\rvert
  \;=\;
  \sum_{I'\in\Ical(V',\le')} \lvert I'\rvert \;+\; \lvert V\rvert,
\]
since the only ideal of $(V,\le)$ containing $x$ is $V$ itself.
\end{comment}

%\color{red}
Since the only difference of $(V, \le)$ and $(V',\le')$ is the element $x$, the number of order ideals not containing $x$ is the same.
%Order ideals that contain $x$ exist in $(V, \le)$, and there is only one such order ideal
%(i.e., the whole $V$) since $x$ is the unique maximal element in $(V,\le)$. 
The only order ideal containing $x$ is the whole $V$ in $\Ical(V,\le)$ since $x$ is the unique maximal element in $(V,\le)$.
Therefore, we have
\[
  \lvert\Ical(V,\le)\rvert \;=\; \lvert\Ical(V',\le')\rvert + 1,
  \qquad
  \sum_{I\in\Ical(V,\le)} \lvert I\rvert
  \;=\;
  \sum_{I'\in\Ical(V',\le')} \lvert I'\rvert \;+\; \lvert V\rvert.
\]
%\color{black}
 
% \medskip
% \textbf{(C) Computing the NDS.}
Let $n:=\lvert V\rvert$, $n':=\lvert V'\rvert = n-1$,
$m':=\lvert\Ical(V',\le')\rvert$, $m:=\lvert\Ical(V,\le)\rvert = m'+1$, and
\[
  S' := \sum_{I'\in\Ical(V',\le')} \lvert I'\rvert,
  \qquad
  S := \sum_{I\in\Ical(V,\le)} \lvert I\rvert \;=\; S' + n .
\]
Then
\begin{align*}
  \nds(\Ical(V,\le)) - \nds(\Ical(V',\le'))
    &= \bigl(2(S'+n) - (m'+1)n\bigr) - \bigl(2S' - m'n'\bigr) \\
    &= 2n - (m'+1)n + m'n' \\
    &= n - m'(n-n')
     \;=\; n - m' \\
    &= \lvert V\rvert - \lvert\Ical(V',\le')\rvert
     \;=\; \lvert V\rvert - \bigl(\lvert\Ical(V,\le)\rvert - 1\bigr) \\
    &= \lvert V\rvert - \lvert\Ical(V,\le)\rvert + 1 .
\end{align*}
Rearranging yields the stated identity.
\end{proof}

\begin{comment}
\begin{proof}
Because \(x\) is above every other vertex, the only ideal containing \(x\) is
\(V\) itself.  Hence
\(
  \lvert\Ical(V,\le)\rvert
  = \lvert\Ical(V',\le)\rvert + 1
\)
and
\(
  \sum_{I\in\Ical(V,\le)}\!\lvert I\rvert
  = \sum_{I\in\Ical(V',\le)}\!\lvert I\rvert + \lvert V\rvert.
\)
Since \(\lvert V\rvert=\lvert V'\rvert+1\), substituting into the definition of
\(\nds\) yields the stated identity.
\end{proof}
\end{comment}

%\subsection{A lower bound on the number of ideals}

\begin{lemma}\label{lem:lower_bound}
For any finite poset \((V,\le)\),
\(
  \lvert\Ical(V,\le)\rvert \ge \lvert V\rvert + 1.
\)
\end{lemma}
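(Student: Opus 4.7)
The plan is to exhibit $|V|+1$ distinct order ideals explicitly. For each $v\in V$, consider the principal downset
\[
  \downarrow\! v \;:=\; \{\,y\in V \mid y\le v\,\}.
\]
Transitivity of $\le$ makes each $\downarrow\! v$ an order ideal, and the empty set is trivially an order ideal. The strategy is to show these $|V|+1$ sets are pairwise distinct members of $\Ical(V,\le)$.

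First I would check injectivity of $v\mapsto\,\downarrow\! v$. Suppose $\downarrow\! x=\,\downarrow\! y$; since $x\in\,\downarrow\! x$ we get $x\le y$, and symmetrically $y\le x$, so antisymmetry of the partial order forces $x=y$. This gives $|V|$ distinct principal ideals. Next I would separate them from $\varnothing$: each $\downarrow\! v$ contains $v$, so $\downarrow\! v\ne\varnothing$ for every $v\in V$. Combining,
\[
  \Ical(V,\le) \;\supseteq\; \{\varnothing\}\cup\{\,\downarrow\! v \mid v\in V\,\},
\]
and the right-hand side is a disjoint union of a one-element set and a set of cardinality $|V|$, giving $|\Ical(V,\le)|\ge |V|+1$.

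There is no real obstacle here; the argument depends only on reflexivity, transitivity, and antisymmetry of $\le$. The only subtlety worth flagging is that antisymmetry is genuinely used to ensure the $\downarrow\! v$'s are distinct, which is why the statement is phrased for posets rather than general preorders (in a preorder two $\sim$-equivalent elements yield the same principal downset, so the bound $|V|+1$ can fail).
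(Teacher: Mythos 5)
Your proof is correct and is essentially identical to the paper's: both use the map $v \mapsto \{u \in V : u \le v\}$ (principal ideals), invoke antisymmetry for injectivity, and add the empty ideal to reach $|V|+1$. Your extra remark about where antisymmetry is needed (and why the bound can fail for preorders) is a nice clarification but does not change the argument.
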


\begin{proof}
% The injection \(v\mapsto\downarrow v\) (principal ideal) gives
Let $I_v =\{u\in V: u\le v\}$, i.e., the principal ideal of $v$. The injection \(v\mapsto I_v\) gives
\(\lvert V\rvert\) mutually distinct non-empty ideals by antisymmetry.
Adding the empty ideal completes the bound.
\end{proof}

\begin{lemma}%[NDS of a disjoint union]
\label{lem:unionsum}
For set families $\mathcal F_1,\mathcal F_2$ (on a common finite ground set $U$),
\[
  \sum_{A\in\mathcal F_1}\sum_{B\in\mathcal F_2}\lvert A\cup B\rvert
  \;=\;
    \lvert\mathcal F_2\rvert\sum_{A\in\mathcal F_1}\lvert A\rvert
    \;+\;
    \lvert\mathcal F_1\rvert\sum_{B\in\mathcal F_2}\lvert B\rvert
    \;-\;
    \sum_{A\in\mathcal F_1}\sum_{B\in\mathcal F_2}\lvert A\cap B\rvert.
\]
\end{lemma}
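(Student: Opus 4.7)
The plan is to reduce the identity to the pointwise inclusion–exclusion formula
\[
  \lvert A\cup B\rvert \;=\; \lvert A\rvert + \lvert B\rvert - \lvert A\cap B\rvert,
\]
which holds for arbitrary finite sets $A,B$. Once this is in hand, the lemma is a purely formal manipulation of double sums, so no genuine combinatorial content beyond inclusion–exclusion is needed.

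Concretely, I would first substitute the pointwise identity into the left-hand side, obtaining
\[
  \sum_{A\in\mathcal F_1}\sum_{B\in\mathcal F_2}\lvert A\cup B\rvert
  = \sum_{A\in\mathcal F_1}\sum_{B\in\mathcal F_2}\bigl(\lvert A\rvert+\lvert B\rvert-\lvert A\cap B\rvert\bigr),
\]
and then split the double sum by linearity. For the first summand, since $\lvert A\rvert$ is independent of $B$, we get $\sum_{A}\lvert A\rvert\cdot\sum_{B} 1 = \lvert\mathcal F_2\rvert\sum_{A\in\mathcal F_1}\lvert A\rvert$; symmetrically, the second summand evaluates to $\lvert\mathcal F_1\rvert\sum_{B\in\mathcal F_2}\lvert B\rvert$; the third summand is already in the desired form. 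Collecting yields exactly the right-hand side of the lemma.

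There is no real obstacle here: the identity is just the double-sum version of inclusion–exclusion, and the only care needed is to keep the indices and cardinalities straight (for example, interpreting $\sum_{A\in\mathcal F_1}\sum_{B\in\mathcal F_2}\lvert B\rvert$ correctly as $\lvert\mathcal F_1\rvert\sum_{B\in\mathcal F_2}\lvert B\rvert$). In the Lean 4 formalization, the main subtlety would likely be selecting the right lemma name for $|A\cup B|+|A\cap B|=|A|+|B|$ on finsets and handling the rewriting inside nested \texttt{Finset.sum}, but mathematically the argument is one line of rewriting followed by linearity of summation.
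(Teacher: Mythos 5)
Your proof is correct and follows exactly the same route as the paper: substitute the pointwise inclusion--exclusion identity $\lvert A\cup B\rvert=\lvert A\rvert+\lvert B\rvert-\lvert A\cap B\rvert$, split the double sum by linearity, and collapse the constant inner sums into the factors $\lvert\mathcal F_2\rvert$ and $\lvert\mathcal F_1\rvert$. No differences worth noting.
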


\begin{proof}
% Fix a finite ground set $U$ with $\mathcal F_1,\mathcal F_2\subseteq 2^U$.
For each $A\subseteq U$ and $B\subseteq U$ we have the identity
$\lvert A\cup B\rvert = \lvert A\rvert + \lvert B\rvert - \lvert A\cap B\rvert$.
Summing over all ordered pairs $(A,B)\in \mathcal F_1\times\mathcal F_2$,
% and using linearity of finite sums,
\begin{comment}
\begin{align*}
\sum_{A\in\mathcal F_1}\sum_{B\in\mathcal F_2} \lvert A\cup B\rvert
 &= \sum_{A}\sum_{B}\bigl(\lvert A\rvert + \lvert B\rvert - \lvert A\cap B\rvert\bigr)\\
 &= \underbrace{\sum_{A}\sum_{B}\lvert A\rvert}_{\text{(I)}}
    \;+\;
    \underbrace{\sum_{A}\sum_{B}\lvert B\rvert}_{\text{(II)}}
    \;-\;
    \underbrace{\sum_{A}\sum_{B}\lvert A\cap B\rvert}_{\text{(III)}}.
\end{align*}
We evaluate the three terms separately.

\smallskip
\emph{Term (I).} For fixed $A\in\mathcal F_1$, the inner sum over $B$ simply
repeats $\lvert A\rvert$ exactly $\lvert\mathcal F_2\rvert$ times:
$\sum_{B\in\mathcal F_2}\lvert A\rvert=\lvert\mathcal F_2\rvert\,\lvert A\rvert$.
Therefore
\[
  \sum_{A\in\mathcal F_1}\sum_{B\in\mathcal F_2}\lvert A\rvert
  = \lvert\mathcal F_2\rvert \sum_{A\in\mathcal F_1}\lvert A\rvert.
\]

\smallskip
\emph{Term (II).} Symmetrically,
$\sum_{A\in\mathcal F_1}\sum_{B\in\mathcal F_2}\lvert B\rvert
 = \lvert\mathcal F_1\rvert \sum_{B\in\mathcal F_2}\lvert B\rvert$.

\smallskip
\emph{Term (III).} We keep it as the double sum
$\sum_{A\in\mathcal F_1}\sum_{B\in\mathcal F_2}\lvert A\cap B\rvert$.

\smallskip
Substituting these evaluations into the decomposition above yields exactly the
claimed identity.

\end{comment}

\begin{align*}
\sum_{A\in\mathcal F_1}\sum_{B\in\mathcal F_2} \lvert A\cup B\rvert
 &= \sum_{A\in\mathcal F_1}\sum_{B\in\mathcal F_2}\bigl(\lvert A\rvert + \lvert B\rvert - \lvert A\cap B\rvert\bigr)\\
 &= \sum_{A\in\mathcal F_1}\sum_{B\in\mathcal F_2}\lvert A\rvert
    \;+\;
    \sum_{A\in\mathcal F_1}\sum_{B\in\mathcal F_2}\lvert B\rvert
    \;-\;
    \sum_{A\in\mathcal F_1}\sum_{B\in\mathcal F_2}\lvert A\cap B\rvert 
    \\
&= 
    \lvert\mathcal F_2\rvert\sum_{A\in\mathcal F_1}\lvert A\rvert
    \;+\;
    \lvert\mathcal F_1\rvert\sum_{B\in\mathcal F_2}\lvert B\rvert
    \;-\;
    \sum_{A\in\mathcal F_1}\sum_{B\in\mathcal F_2}\lvert A\cap B\rvert.
\end{align*}

\end{proof}

\begin{lemma}%[Product formula]
\label{lem:product}
% If a rooted-forest poset decomposes as a disjoint union of two non-empty
%components \(C_1\sqcup C_2\), 
If a poset $(V,\le)$ decomposes as a disjoint union of two non-empty posets, 
$(V,\le) = (C_1,\le_1) + (C_2,\le_2)$, 
then
\[
  \nds\bigl(\Ical(V,\le)\bigr)
  = \lvert\Ical(C_2,\le_2)\rvert\,\nds\!\bigl(\Ical(C_1,\le_1)\bigr)
    + \lvert\Ical(C_1,\le_1)\rvert\,\nds\!\bigl(\Ical(C_2,\le_2)\bigr).
\]
\end{lemma}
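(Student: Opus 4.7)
The plan is to exploit the bijection between order ideals of a disjoint union and pairs of order ideals of the two summands, then read $\nds$ directly off its definition. The only content beyond bookkeeping is that disjointness of the ground sets $C_1$ and $C_2$ makes every cardinality computation split additively.

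First I would observe that, since there are no comparabilities between elements of $C_1$ and $C_2$, the map
\[
  (I_1,I_2)\;\longmapsto\; I_1\cup I_2
\]
is a bijection from $\Ical(C_1,\le_1)\times\Ical(C_2,\le_2)$ onto $\Ical(V,\le)$: a set $I\subseteq V$ is an ideal if and only if its restrictions $I\cap C_1$ and $I\cap C_2$ are ideals of the respective summands. Writing $m_i:=|\Ical(C_i,\le_i)|$, $n_i:=|C_i|$, and $S_i:=\sum_{I\in\Ical(C_i,\le_i)}|I|$, this immediately gives $|\Ical(V,\le)|=m_1m_2$ and $|V|=n_1+n_2$.

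Second, to compute $\sum_{I\in\Ical(V,\le)}|I|$, I would apply Lemma~\ref{lem:unionsum} with $\mathcal F_1:=\Ical(C_1,\le_1)$ and $\mathcal F_2:=\Ical(C_2,\le_2)$ on the common ground set $V$. Because every $A\in\mathcal F_1$ is contained in $C_1$ and every $B\in\mathcal F_2$ in $C_2$, we have $A\cap B=\varnothing$, so the intersection term in the lemma vanishes. Combined with the bijection above, this gives
\[
  \sum_{I\in\Ical(V,\le)}|I| \;=\; m_2 S_1 + m_1 S_2.
\]

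Third, I would substitute these three expressions into the definition $\nds(\Ical(V,\le)) = 2\sum_I|I| - |\Ical(V,\le)|\cdot|V|$ and factor the resulting expression as $m_2(2S_1 - m_1 n_1) + m_1(2S_2 - m_2 n_2)$, which is exactly $m_2\,\nds(\Ical(C_1,\le_1)) + m_1\,\nds(\Ical(C_2,\le_2))$. There is no genuine obstacle: the one place requiring care is the disjointness observation $A\cap B=\varnothing$, which is precisely what collapses the general union-sum identity of Lemma~\ref{lem:unionsum} into an exact product-style rule instead of a correction-laden inequality. The rest is routine algebra.
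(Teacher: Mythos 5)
Your proposal is correct and takes essentially the same route as the paper: both use the bijection $\Ical(C_1,\le_1)\times\Ical(C_2,\le_2)\to\Ical(V,\le)$ via union, invoke Lemma~\ref{lem:unionsum}, and note that disjointness of $C_1$ and $C_2$ kills the $|A\cap B|$ term. Your writeup simply makes explicit the algebra that the paper's proof leaves to the reader after citing Lemma~\ref{lem:unionsum}.
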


\begin{proof}
\begin{comment}
The bijection \(\Ical(V,\le)\!\to\!\Ical(C_1,\le)\times\Ical(C_2,\le)\) given by
\(I\mapsto(I\cap C_1,I\cap C_2)\) preserves counts; the double sum of
ideal-sizes separates via Lemma~\ref{lem:unionsum}.
\end{comment}
Since $(C_1,\le_1)$ and $C_2,\le_2)$ are disjoint,
an order ideal $I$ of $(V,\le)$ is decomposed as $I=I_1\sqcup I_2$ with $I_1\in \Ical(C_1,\le_1)$ and $I_2\in\Ical(C_2,\le_2)$.
Hence,
$$ |\Ical(V,\le)| = |\Ical(C_1,\le_1)|\cdot |\Ical(C_2,\le_2)|. $$
The statement follows from Lemma~\ref{lem:unionsum}.
\end{proof}

%\subsection{Induction on \texorpdfstring{$|V|$}{|V|}}

\bigskip
% \begin{proof}[Proof of Theorem~\ref{thm:sub}]
\begin{proof}[Proof of the Secondary Main Theorem (Theorem~\ref{thm:sub})]
\mbox{}\\
The proof is by induction on \(n=\lvert V\rvert\).

% \emph{Base case \(n=1\).}  Then
When $n=1$, 
\(\Ical=\{\varnothing,V\}\) and
\(\nds(\Ical)=0\).

% \emph{Inductive step.}
When $n\ge 2$,
assume the theorem is true for all rooted forests with fewer than \(n\) vertices.

\begin{enumerate}
%   \item[\textbf{(a)}] \emph{Disconnected case.}  
%        Write \(V=C_1\sqcup C_2\) with both \(C_i\neq\varnothing\).  
%        By induction each component has non-positive NDS; multiplying by the
%        positive factors in Lemma~\ref{lem:product} preserves the sign.
\item[-]
If $(V,\le)$ is disconnected, then it can be expressed by a disjoint union as
$(V,\le) = (C_1,\le_1) \linebreak[0] + (C_2,\le_2)$ with $V=C_1\sqcup C_2$, and each $(C_i,\le_i)$ has non-positive NDS by the induction hypothesis.
Hence $\nds(\Ical(V,\le))$ is non-positive by Lemma~\ref{lem:product} and the non-positivity of $\nds(\Ical(C_1,\le_1))$ and $\nds(\Ical(C_2,\le_2))$.

%  \item[\textbf{(b)}] \emph{Connected case.}  
%        Let \(x\) be the unique maximal vertex by Lemma \ref{lem:forest}.  
%        Delete \(x\) to obtain \(V'\).  
%        The induction hypothesis gives
%        \(\nds(\Ical(V',\le))\le0\).  
%        Lemmas \ref{lem:deletion} and \ref{lem:lower_bound} imply
%        \(\nds(\Ical(V,\le))\le\nds(\Ical(V',\le))\le0\).
\item[-]
If $(V,\le)$ is connected, let $x$ be the unique maximal element.
Let $V'=V\setminus\{x\}$ and $\le'$ be the restriction of $\le$ to $V'$.
By Lemma~\ref{lem:deletion}, Lemma~\ref{lem:lower_bound}, and the induction hypothesis, we have
\begin{align*}
\nds(\Ical(V,\le))
&=\nds(\Ical(V',\le'))+(|V|-|\Ical(V,\le)|+1) \\
&\le \nds(\Ical(V',\le'))\le0 .
\end{align*}
\end{enumerate}
%Thus \(\nds(\Ical(V,\le))\le0\) for all \(n\).
\end{proof}

% ---------------------------------------------------------------------------
% --- Formal Verification in Lean 4 ----------------------------------------
%\section{Formal Verification in Lean 4}\label{sec:lean}

\section{Lean 4 Formalization}
\label{sec:lean}
The validity of all theorems and lemmas in this paper is machine-checked in
the proof assistant \textsc{Lean 4}.
%\subsection{Why formalise?}
Using proof assistants offers several advantages.
First, they ensure the rigor of the proofs, as every inference is validated by Lean’s kernel, eliminating overlooked edge cases and computational errors.
They also ensure reproducibility. The proof scripts are plain text and can be replayed by anyone to obtain a formal verification.
Moreover, the codes used for the proofs can be reused by others, as the relevant definitions and lemmas are incorporated into the community library \texttt{mathlib4}, thereby accelerating future work on Frankl-type problems.

 We briefly describe the formalized code used in our proofs. % and the AI-assisted workflow that made itpractical.
The Lean code snippets below are simplified for readability and exposition. For the complete and precise formalization, please refer to the source code in the repository 
\verb+https://github.com/kashiwabarakenji/avg-rare+.

% --- SetFamily ---
\subsection*{SetFamily}
\begin{lstlisting}
structure SetFamily  where
  ground : Finset $\alpha$
  sets : Finset $\alpha$ $\rightarrow$ Prop
\end{lstlisting}

Structure \texttt{SetFamily} provides a family of subsets of a finite ground set.
\texttt{SetFamily.sets} is the predicate that determines whether a given subset is in the family.

% --- NDS ---
\subsection*{Normalized Degree Sum (NDS)}
\begin{lstlisting}
def NDS (F : SetFamily $\alpha$) : Int :=
  2 * F.totalHyperedgeSize - F.numHyperedges * F.ground.card
\end{lstlisting}

The above code is the definition of the normalized degree sum (NDS):
\[
\mathrm{NDS}(\mathcal{F}) := 2 \sum_{F \in \mathcal{F}} |F| - |\mathcal{F}| \cdot |V|.
\]
NDS measures the average-rarity of elements in a set family. When NDS $\le 0$, the family is average-rare.
\texttt{F.ground.card} means the cardinality of the ground set. \texttt{Int} represents the set of all integers.
In the above, NDS returns an integer value.

% --- FuncSetup ---
\subsection*{FuncSetup}
\begin{lstlisting}
structure FuncSetup ($\alpha$ : Type) where
  ground : Finset $\alpha$
  f : ground $\rightarrow$ ground
\end{lstlisting}

Structure \texttt{FuncSetup} provides the assumption for our problem, including the ground set and the function $f:V \rightarrow V$ on the ground set.
It induces the preorder \texttt{FuncSetup.le} on the ground set.

% --- Order Ideal ---
\subsection*{Order Ideal}
\begin{lstlisting}
def isOrderIdealOn (S: FuncSetup $\alpha$) (le : $\alpha$ $\rightarrow$ $\alpha$ $\rightarrow$ Prop) (I : Finset $\alpha$) : Prop :=
  I $\subseteq$ S.ground $\land$
  $\forall x, x \in I \rightarrow \forall y,  y\in$ S.ground $\rightarrow$ S.le y x $\rightarrow$ y $\in$ I
\end{lstlisting}
This is the condition of the order ideal
\[
I \subseteq V \quad \wedge \quad (\forall x \in I, \forall y \in V,\ y \leq x \Rightarrow y \in I).
\]
\texttt{FuncSetup.idealFamily} is the set of all order ideals defined using \texttt{isOrderIdealOn}.

% --- Main Theorem ---
\subsection*{Main Theorem}
\begin{lstlisting}
theorem main_nds_nonpos {$\alpha$ : Type} 
  (S : FuncSetup $\alpha$) :
  (S.idealFamily).NDS $\leq$ 0 := by
  apply Reduction.main_nds_nonpos_of_secondary
  intro T hT
  have hT' : isPoset T := by
    dsimp [isPoset]
    dsimp [has_le_antisymm]
    exact T.antisymm_of_isPoset hT
  exact secondary_main_theorem T hT'
\end{lstlisting}
This is the Main Theorem of our paper: for any function \(f:V\rightarrow V\), the induced order ideal family $\mathcal{I}$ is always average-rare:
\[
{\mathrm{NDS}(\mathcal{I}(V, \le)) \le 0}.
\]

\texttt{(S:FuncSetup $\alpha$)} is the assumption of the statement, placed before the colon. 
\texttt{(S.idealFamily).NDS <= 0} is the conclusion of the statement.
The codes followed by ``\texttt{:=}'' are the proof of the statement. Proof codes consist of tactics in Lean 4.
Theorem \verb+theorem main_nds_nonpos_of_secondary+ in the repository is the reduction theorem for proving the Main Theorem from the Secondary Main Theorem.

% --- Secondary Main Theorem ---
\subsection*{Secondary Main Theorem}
\begin{lstlisting}
theorem secondary_main_theorem {$\alpha$ : Type}
  (S : FuncSetup $\alpha$) (hpos : isPoset S) :
  (S.idealFamily).NDS $\leq$ 0
\end{lstlisting}
Assumption \texttt{isPoset S} means 
%that the order \texttt{FuncSetup.le} $S$ is a partial order. 
that the order relation of $S$ is a partial order.
This statement is the Secondary Main Theorem: for any functional poset (rooted forest poset), the induced order ideal family is always average-rare.
This assumption is stronger than that of the Main Theorem. The Secondary Main Theorem is also used as a lemma for proving the Main Theorem.

\begin{comment}
% --- TraceAt ---
\subsection*{Trace Operator}
\begin{lstlisting}
def traceAt (x : $\alpha$) (F : SetFamily $\alpha$) : SetFamily $\alpha$ :=
  { ground := F.ground.erase x,
    sets := fun A => (F.sets (A $\cup$ {x}) $\lor$ F.sets A)
  }

\end{lstlisting}
This code defines the trace operator for a set family, which removes an element \(x\) from all sets in the family:
\[
\operatorname{trace}_x(\mathcal{F}) := \{ F \setminus \{x\} \mid F \in \mathcal{F} \}.
\]
The trace operator is used to remove an element from all sets, essential for inductive proofs and structural reductions.
\end{comment}

\bigskip
% --- AI-assisted formalisation ---
During the formalization, we made extensive use of AI-assisted tools, notably ChatGPT~5 (for brainstorming tactics and diagnosing type errors), Lean Copilot (for interactive tactic completion), and GitHub Copilot (for boilerplate suggestions). These tools drastically reduced the development time, from what would have taken months by hand to only a few weeks, while correctness is uncompromised since all suggested fragments are checked by Lean’s kernel.
Our experience shows that recent AI tools have made Lean 4 formalization of advanced combinatorial mathematics practically feasible. Without them, the scale of the present project would likely have been out of reach. We therefore regard this as evidence that AI-assisted proof development is becoming an essential methodology in formal mathematics.

% ---------------------------------------------------------------------------
% --- Conclusion and Future Directions -------------------------------------
\section{Conclusion and Future Directions}\label{sec:conclusion}

In this paper, we have proved that every functional preorder, a preorder generated by a function $f : V \to V$, induces a family of order ideals that is average-rare. 
% This result confirms Frankl's Conjecture for the subclass of closure systems in which each vertex serves as the root of exactly one stem of size one in a rooted-set representation. The proof relies on a key reduction: each non-trivial equivalence class is replaced by a single vertex via the trace operation, which never increases the normalized degree sum~$\nds$. The Secondary Main Theorem then establishes $\nds(\mathcal{I}) \le 0$ for rooted-forest posets by induction on the number of vertices. 
% Extending this approach to rooted sets with larger stems would represent a natural next step toward broader cases of the conjecture.
Average-rarity is a stronger condition than the existence of a rare element.
The existence of a rare element in the order-ideal family of a preordered set in general can be shown rather easily (Lemma~\ref{lem:rare}). That is, Frankl's conjecture holds for the order-ideal family of any preordered set. On the other hand, average-rarity does not hold for the order-ideal family of any preordered set, as observed in Example~\ref{example:not_average_rare}.
In this paper we confirmed the average-rarity for the order-ideal family of a functional preordered set (Main Theorem (Theorem~\ref{thm:main})).
This includes the case for functional posets, equivalently, for rooted-forest posets (Secondary Main Theorem (Theorem~\ref{thm:sub})).

The order-ideal family of a preordered set is equivalently rephrased as an intersection-closed family represented by rooted sets such that
the stems are singletons (Lemma~\ref{lem:preorder-ideal}).
Further, if the preorder is functional, then
each element is a root of at most one rooted set
in the representation. 
Extending this approach, a natural next step toward broader cases is to consider rooted sets with stems of larger size. We have the following conjecture.

\begin{conjecture}%[Roots-per-vertex conjecture]
Let $\mathcal C$ be an intersection-closed family on a finite ground set~$V$
that contains $\varnothing$ and admits a rooted-set representation in which
every element of~$V$ is the root of 
% exactly one 
at most one rooted set.  Then
$\mathcal C$ is average-rare.
\end{conjecture}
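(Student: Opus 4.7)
The plan is to adapt the strategy of the Main Theorem (Theorem~\ref{thm:main}): identify elements that play the role of ``maximal'' elements, show they are rare, and use a trace-type reduction to induct on $|V|$. The crucial combinatorial feature of the hypothesis---that each element of $V$ is the root of at most one rooted set---is exactly what allows a trace operation to preserve the hypothesis, since removing an element only deletes one rule (the one with that element as root, if any) and shrinks the stems of the rules in which it appeared as a non-root element.

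The first step is to identify a rare element. Any element $u \in V$ that is \emph{not} the root of any rooted set in the representation is automatically rare in $\mathcal{C}$: for every $F \in \mathcal{C}$ containing $u$, the set $F \setminus \{u\}$ is still in $\mathcal{C}$, because no rule has head $u$, so removing $u$ cannot violate any constraint, and any rule with $u$ appearing in its stem becomes vacuously satisfied. This gives an injection $\{F \in \mathcal{C} : u \in F\} \hookrightarrow \{F \in \mathcal{C} : u \notin F\}$, hence $2\deg(u) \le |\mathcal{C}|$. Next, I would formulate a trace lemma analogous to Lemma~\ref{lem:tracends}, stating that $\operatorname{trace}_u(\mathcal{C})$ still admits a representation in which every element is the root of at most one rooted set (delete $u$ from every stem, leaving all other roots unchanged), and that $\nds$ does not increase under this operation. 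These two ingredients furnish the inductive step whenever a non-root exists; the base case $|V|=1$ is trivial.

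The main obstacle is the case where every element of $V$ is the root of exactly one rooted set. Since every stem $A_r$ must be non-empty (otherwise $r$ would be forced into $\varnothing$, contradicting $\varnothing \in \mathcal{C}$), iteratively picking an element from each stem produces, by pigeonhole, a ``cycle'' $r_0, r_1, \ldots, r_k = r_0$ with $r_{i+1} \in A_{r_i}$. Such cycles impose strong clustering constraints on feasible sets---if the stems along the cycle interlock tightly, then containing one $r_i$ may force containing many others---and should lead to a rare or average-rare substructure among the cycle members. However, unlike the functional-preorder case, where Lemma~\ref{lem:max} immediately identifies a nontrivial strongly connected component as consisting of maximal elements, the analogous hypergraph cycle does not obviously yield a sink or a parallel-class collapse. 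Isolating the right notion of ``generalized equivalence class'' for rooted-set systems, proving that its elements are collectively rare, and ensuring that the collapse preserves the ``at most one rule per root'' hypothesis, is where the bulk of the new combinatorial work is required, and this is where I expect a purely imitative extension of the present proof to break down.
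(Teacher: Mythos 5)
The statement you are attempting is a \emph{conjecture}: the paper does not prove it, and explicitly remarks that even the weaker Frankl-type consequence (existence of a rare element) is open in this setting. Your proposal likewise stops short of a proof, which you acknowledge, so there is no ``paper proof'' to compare against; but the intermediate steps you sketch have concrete problems worth flagging.

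Your first observation is correct and matches a remark in the paper's conclusion: if $u$ is the root of no rule, then $F\mapsto F\setminus\{u\}$ maps $\{F\in\mathcal C:u\in F\}$ injectively into $\{F\in\mathcal C:u\notin F\}$, so $u$ is rare. However, the paper makes that remark only to reduce the \emph{Frankl} statement (existence of a rare element) to the ``every element is a root'' case; it does not claim this reduction works for average-rarity, and indeed your proposed trace step does not establish it. Two specific gaps: (a) Your proposed representation of $\operatorname{trace}_u(\mathcal C)$ --- ``delete $u$ from every stem'' --- is wrong. Since $u$ is a non-root, $\operatorname{trace}_u(\mathcal C)=\{F\in\mathcal C: u\notin F\}$, and for every $F$ in this family any rule $(A_i,r_i)$ with $u\in A_i$ is vacuous because $A_i\not\subseteq F$. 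The correct representation is therefore obtained by \emph{discarding} every rule whose stem contains $u$, not by shrinking the stem: shrinking replaces $(A_i,r_i)$ by the strictly stronger constraint $(A_i\setminus\{u\},r_i)$ and yields a smaller family. (For example, with $V=\{a,b,u\}$, rules $(\{u\},a)$ and $(\{a\},b)$, the trace at $u$ is $\{\varnothing,\{b\},\{a,b\}\}$, but shrinking the first stem to $\varnothing$ forces $a$, giving only $\{\{a,b\}\}$.) (b) The required inequality $\nds(\mathcal C)\le\nds(\operatorname{trace}_u(\mathcal C))$ --- note the direction you need is the reverse of the ``does not increase'' you wrote --- is not argued and is not an obvious analogue of Lemma~\ref{lem:tracends}. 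In the paper that lemma rests on the trace map being a \emph{bijection}, which holds precisely because the traced element has a parallel partner; a non-root $u$ need not have one, so $|\operatorname{trace}_u(\mathcal C)|<|\mathcal C|$ in general and the degree/size accounting is genuinely different. (In the extreme case where $u$ lies in no stem and is no root, one computes $\nds(\mathcal C)=2\,\nds(\operatorname{trace}_u(\mathcal C))$, so the inequality holds \emph{only} if the traced family already has nonpositive NDS --- i.e., it is equivalent to, not a tool for, the inductive step.)

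Finally, the case you correctly identify as the crux --- every element is the root of exactly one rule, hence the dependency hypergraph has a ``cycle'' of stems --- is left open. Unlike the functional-preorder case, a cycle here does not immediately produce parallel elements (because a stem may contain other elements besides the cycle member that triggers it), so Lemmas~\ref{lem:max}, \ref{lem:parallel-equivalence}, and \ref{lem:trace} have no ready counterparts. Your instinct that some ``collective rarity'' of the cycle members is needed seems reasonable, but as matters stand this is still an open problem and no proof exists, in the paper or in your proposal.
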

%In this conjecture, if there exists an element such that it is the root of no rooted set, then such an element is shown to be rare. Hence, it is enough to consider the case in which each element is the root of exactly one rooted set.

In this setting of the conjecture,
% without the condition that the stems are singletons, 
the existence of a rare element, that is, Frankl's conjecture for this case, is still an open problem.
A positive answer to the conjecture would confirm Frankl’s conjecture for this special case.
Under the condition of the conjecture, if there exists an element such that it is the root of no rooted set, then such an element is easily shown to be rare. Hence, it is enough to consider the case in which each element is the root of exactly one rooted set if one aims to verify Frankl's conjecture for this case.

%\paragraph{Formal verification outlook.}
%Our complete \textsc{Lean 4} development \cite{KashiwabaraRepo2025}

%(\url{https://github.com/kashiwabarakenji/ideal_frankl}) is intended as a
%reusable foundation.  
%Two directions appear promising:
%\begin{itemize}
%  \item \emph{Automated search for counterexamples.}  
%The Lean code already enumerates small functional graphs; extending it to stem-size~2  families may reveal extremal configurations.
%  \item \emph{Entropy-style bounds in Lean.}  Recent entropy proofs
%        \cite{Gilmer2022,sawin2022improved} have yet to be formalised.
%        Integrating them with our combinatorial framework could lead to a fully
%        verified lower-bound hierarchy.
%\end{itemize}

%We hope that both the mathematical results and their formal proofs stimulate
%further interaction between extremal combinatorics and proof assistants.
% ---------------------------------------------------------------------------

\end{document}